\numberwithin{equation}{section}
\numberwithin{figure}{section}
\theoremstyle{plain}
\newtheorem{thm}{\protect\theoremname}[section]
  \theoremstyle{plain}
  \newtheorem{conjecture}[thm]{\protect\conjecturename}
  \theoremstyle{plain}
  \newtheorem{prop}[thm]{\protect\propositionname}
  \theoremstyle{remark}
  \newtheorem*{rem*}{\protect\remarkname}
  \theoremstyle{plain}
  \newtheorem{lem}[thm]{\protect\lemmaname}
  \theoremstyle{remark}
  \newtheorem{rem}[thm]{\protect\remarkname}
  \providecommand{\conjecturename}{Conjecture}
  \providecommand{\lemmaname}{Lemma}
  \providecommand{\propositionname}{Proposition}
  \providecommand{\remarkname}{Remark}
\providecommand{\theoremname}{Theorem}
\begin{document}

\title[Essential normality and decomposability]{Essential normality and the decomposability of algebraic varieties}

\author{Matthew Kennedy}

\address{School of Mathematics and Statistics, Carleton University, 1125 Colonel
By Drive, Ottawa, Ontario K1S 5B6, Canada}

\email{mkennedy@math.carleton.ca}

\author{Orr Moshe Shalit}

\address{Department of Mathematics, Ben-Gurion University of the Negev, P.O.B.
653, Beer-Sheva, 84105, Israel}

\email{oshalit@math.bgu.ac.il}
\begin{abstract}
We consider the Arveson-Douglas conjecture on the essential normality of homogeneous submodules corresponding to algebraic subvarieties of the unit ball. We prove that the property of essential normality is preserved by
isomorphisms between varieties, and we establish a similar result for maps between varieties that are not necessarily invertible. We also relate the decomposability of an algebraic variety to the problem of establishing the essential normality of the corresponding submodule. These results are applied to prove that the Arveson-Douglas conjecture holds for submodules corresponding to varieties that decompose into linear subspaces, and varieties that decompose into components with mutually disjoint linear spans.
\end{abstract}

\subjclass[2000]{47A13, 47A20, 47A99}

\thanks{First author partially supported by NSERC Canada}
\thanks{Second author partially supported by Israel Science Foundation Grant no. 474/12 and by the European Union Seventh Framework Programme ({\em FP7/2007-2013}) under grant agreement no. 321749}

\maketitle

\section{Introduction}
In this paper, we consider a conjecture of Douglas and Arveson that implies a correspondence between algebraic varieties and C*-algebras of essentially normal operators. In the papers \cite{Sha11} and \cite{Ken12}, we showed that this conjecture can be viewed as a problem of finding certain nice decompositions of submodules of $\mathbb{C}[z_1,\ldots,z_d]$. In the present paper, we take a slightly different perspective, and relate the conjecture to the geometry of the variety in question.

Let $d$ be a fixed positive integer, and let $\mathbb{C}[z]=\mathbb{C}[z_{1},\ldots,z_{d}]$
denote the algebra of complex polynomials in $d$ variables. The Drury-Arveson space $H_{d}^{2}$ is
the reproducing kernel Hilbert space on the unit ball $\mathbb{B}_{d}$
generated by the kernel functions 
\[
k_{\lambda}(z)=\frac{1}{1-\langle z,\lambda\rangle},\quad\lambda\in\mathbb{B}_{d}.
\]
Equivalently, $H_d^2$ is the completion of $\mathbb{C}[z]$ with respect to the inner product
\[
\langle z^\alpha,z^\beta\rangle = \delta_{\alpha,\beta}\frac{\alpha_1! \cdots \alpha_d!}{(\alpha_1+\ldots +\alpha_d)!}, \quad \alpha, \beta\ \in \mathbb{N}_0^d,
\]
where we have used the notation $z^\alpha = z_1^{\alpha_1} \cdots z_d^{\alpha_d}$ for $\alpha=(\alpha_1,\ldots,\alpha_d)$ in $\mathbb{N}_0^d$.

The $d$-shift $S=(S_{1},\ldots,S_{d})$  is the $d$-tuple of multiplication operators on $H_d^2$ corresponding to the coordinate functions $z_1,\ldots,z_d$. They act by
\[
(S_{i}f)(z)=z_{i}f(z),\quad f\in H_{d}^{2}.
\]
We will be particularly interested in these operators, which were introduced and extensively studied in \cite{Arv98}. Together with the $d$-shift $S$, the space $H_d^2$ forms a Hilbert module over
$\mathbb{C}[z]$, with the module action given by
\[
pf = p(S_1,\ldots,S_d)f, \quad p \in \mathbb{C}[z],\ f \in H_d^2.
\]
Endowed with this module structure, $H_d^2$ is known as the {\em $d$-shift Hilbert module}.

For an ideal $I$ of $\mathbb{C}[z]$, we define
\[
\mathcal{F}_{I}=H_{d}^{2}\ominus I.
\]
Note that since the closure of $I$ in $H_d^2$ is an invariant subspace for each $S_j$, the space $\mathcal{F}_I$ is coinvariant for each $S_j$.
We let $S_{j}^{I}$ denote the compression of $S_{j}$ to $\mathcal{F}_{I}$, i.e.,
\[
S_{j}^{I}=P_{\mathcal{F}_{I}}S_{j}\mid_{\mathcal{F}_{I}}.
\]
Then as a Hilbert module, $\mathcal{F}_{I}$ is equivalent to the quotient of $H_{d}^{2}$ by the closure of $I$ in $H_{d}^{2}$.

We will require the following correspondence between ideals of $\mathbb{C}[z]$ and subsets of the unit ball $\mathbb{B}_d$  of $\mathbb{C}^d$. For an ideal $I$ of $\mathbb{C}[z]$, we define
\[
V(I)=\{z\in\mathbb{B}_{d}\mid p(z)=0\ \forall p\in I\},
\]
and for a subset $V$ of $\mathbb{B}_{d}$, we define
\[
I(V)=\{p\in\mathbb{C}[z]\mid p(z)=0\ \forall z\in V\}.
\]
For a homogeneous ideal $I$ we shall call the set $V(I)$ a {\em homogeneous variety in $\mathbb{B}_d$}. All the varieties in this paper will be homogeneous varieties in $\mathbb{B}_d$. 

If the ideal $I$ is radical, then the space $\mathcal{F}_{I}$ is a reproducing kernel Hilbert space over $V(I)$. More generally, it was established in \cite[Lemma 5.5]{DRS11} that in this case we have the equality
\[
\mathcal{F}_{I}=\overline{\mathrm{span}}\{k_{\lambda}\mid\lambda\in V(I)\}.
\]

Arveson's conjecture is that for every homogeneous ideal $I$ of $\mathbb{C}[z]$, the quotient operators $S_1^I,\ldots,S_d^I$ satisfy
\begin{equation}
[S_{i}^{I},S_{j}^{I*}]:=S_{i}^{I} S_{j}^{I*} - S_{j}^{I*} S_{i}^{I} \in\mathcal{L}^{p},\quad p>d,\ 1 \leq i,j \leq d, \label{eq:trace}
\end{equation}
where for $1\leq p \leq \infty$, $\mathcal{L}^p$ denotes the set of Schatten $p$-class operators on $H_d^2$.
The general version of Arveson's conjecture includes multiplicity, but we are not worrying about that for now, and in fact, by \cite[Section 5]{Sha11}, the full conjecture is equivalent to the scalar case (up to a small modification of the range of $p$).

Douglas conjectured further that (\ref{eq:trace}) should hold for all $p>\dim I$. Note that $\dim I$ is defined in the following way. It
is known that there is a polynomial $h_{I}(x)$, called the {\em Hilbert polynomial}, such that for sufficiently large $n$, the dimension of $\mathbb{H}_{n}\ominus I_{n}$ is equal
to $h_{I}(n)$. The dimension $\dim I$ is defined to be $\deg(h_{I}(x))+1$ (see, e.g., \cite[Chapter 9]{CLS92}). If
$V$ is the affine variety determined by $I$ then $\dim I = \dim V$. For example, when the variety $V$ is a union of subspaces this is
just the maximal dimension of the subspaces.

In this note we will be concerned with the Arveson-Douglas conjecture
for radical homogeneous ideals. To express our ideas in the clearest way, we are
led to introduce the following notation. If $X$ is a subspace of $\mathbb{C}^{d}$,
then we write $X^{n}$ for the $n$-th symmetric tensor power of $X$ with itself. If $V\subseteq X$ is a homogeneous variety in the ball, i.e. if $V$ is of the form $V = V(I)$, for some radical homogeneous ideal $I$ of $\mathbb{C}[z]$, then we define $V^{n}$ to be the subspace of $X^{n}$ spanned by elements of the form
\[
\lambda^{n}=\underbrace{\lambda\otimes\cdots\otimes\lambda}_{n\textrm{ times }}, \quad \lambda \in V.
\]
Thus, if $V=V_{1}\cup\ldots\cup V_{k}$ is a union of varieties, then we have that 
\[
V^{n}=\sum_{i=1}^{k}V_{i}^{n}.
\]
Using the natural identification of $\mathbb{C}[z]$ with symmetric Fock space gives the decomposition
\[
\mathcal{F}_{I}=\oplus_{n=0}^{\infty}V^{n}.
\]
With this identification, the kernel functions $k_\lambda$ of $\mathcal{F}_I$ are of the form 
\[
k_{\lambda}=\sum_{n=0}^{\infty}\overline{\lambda}^{n}, \quad \lambda \in V.
\]
We remark that (for sufficiently large $n$) the dimension of $V^{n}$
is bounded by $n^{d-1}$, because it is a subspace of $(\mathbb{C}^{d})^{n}$,
which has dimension $\frac{(n+d-1)!}{n!(d-1)!}$.

When we consider $\mathcal{F}_{I}$ as a reproducing kernel Hilbert space over $V(I)$, then the operators $S_{i}^{I}$
correspond to multiplication operator $M_{f_i}$ defined by
\[
(M_{f_i} g)(z) = (f_{i}g)(z), \quad g \in \mathcal{F}_I,
\]
where $f_i = z_{i}\big|_{V(I)}$. The algebra $\mathcal{A}_{I}$ is defined to be the normed closed
unital algebra generated by $(S_{1}^{I},\ldots,S_{d}^{I})$. This algebra is a normed closed subalgebra of the multiplier algebra of
$\mathcal{F}_{I}$. If $p$ belongs to $\mathbb{C}[z]$, then it will be convenient to identify $p(S^I_1, \ldots, S^I_d)$ with the multiplication operator $M_p$.

For $p \geq 1$, we will say that the quotient module $\mathcal{F}_{I}$ is \emph{$p$-essentially normal }if 
\[
[S_{i}^{I},S_{j}^{I*}]\in\mathcal{L}^{p},\quad \ 1 \leq i,j \leq d.
\]
Recall that this is equivalent to $|[S_{i}^{I},S_{j}^{I*}]|^p$ being trace class for $1 \leq i,j \leq d$.

If $V=V(I)$ and $I=I(V)$, which is the case whenever $I$ is a radical ideal, then we will write $S_1^V, \ldots ,S_d^V$ for $S_1^I, \ldots, S_d^I$.
Similarly, we will write $\mathcal{F}_{V}$ for $\mathcal{F}_{I}$, and $\mathcal{A}_V$ for $\mathcal{A}_I$. Using this notation, we now state for reference the form of the Arveson-Douglas conjecture that we consider in this paper.

\begin{conjecture}[Geometric Arveson-Douglas Conjecture] \label{conj}
Let $V$ be a homogenous variety in $\mathbb{B}_{d}$. Then the submodule $\mathcal{F}_{V}$
is $p$-essentially normal for every $p>\dim V$.
\end{conjecture}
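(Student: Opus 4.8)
The plan is not to settle the conjecture in full — that remains open — but to establish it in the two cases highlighted in the abstract, by reducing $p$-essential normality of $\mathcal F_V$ to a single Schatten-class membership in the graded picture $\mathcal F_V=\oplus_nV^n$ and then supplying a geometric estimate about angles between symmetric tensor powers. For the reduction, note first that, since $\mathcal F_V$ is coinvariant, $S_j^{V*}f=S_j^*f$ for $f\in\mathcal F_V$, and a direct computation gives
\[
[S_i^V,S_j^{V*}]=P_{\mathcal F_V}[S_i,S_j^*]\big|_{\mathcal F_V}+B_{ij},\qquad B_{ij}:=P_{\mathcal F_V}\,S_j^*\,(I-P_{\mathcal F_V})\,S_i\big|_{\mathcal F_V}.
\]
On $H_d^2$ the commutator $[S_i,S_j^*]$ is block diagonal along the grading with $n$-th block of norm $O(1/n)$, so its compression to the thin subspace $\mathcal F_V$ — whose $n$-th block has dimension $O(n^{\dim V-1})$ — lies in $\mathcal L^p$ for every $p>\dim V$; hence $\mathcal F_V$ is $p$-essentially normal for all $p>\dim V$ if and only if every $B_{ij}$ is in $\mathcal L^p$ for all such $p$. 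The operator matrix $(B_{ij})$ is positive, since $\sum_{i,j}\langle B_{ij}\xi_i,\xi_j\rangle=\|\sum_i(I-P_{\mathcal F_V})S_i\xi_i\|^2$, so this is in turn equivalent to the single positive operator $B:=\sum_iB_{ii}$ lying in $\mathcal L^p$; its $n$-th graded block is $\sum_iP_{V^n}S_i^*P_{I_{n+1}}S_i\big|_{V^n}$, where $I_{n+1}$ denotes the degree-$(n+1)$ part of $\overline{I(V)}$, and it measures the ``leakage'' of $V^n$ out of $V^{n+1}$ under one application of the shift.

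Before assembling the cases I would record a tool of independent interest: essential normality is a variety isomorphism invariant. A linear isomorphism $T$ carrying $V$ onto $W$ induces, by composition, a completely bounded isomorphism $\mathcal A_V\cong\mathcal A_W$ that lifts to a bounded invertible module map between the two quotient modules; comparing the leakage operators of $V$ and $W$ through the identifications $V^n\to W^n$ induced by $T^{\otimes n}$, the exponential condition numbers $(\|T\|\,\|T^{-1}\|)^n$ enter only through the shifts and their adjoints, which transform contravariantly, so they cancel in pairs and only the \emph{ranks} — not the norms — of the relevant defects propagate; thus $B_V\in\mathcal L^p\Leftrightarrow B_W\in\mathcal L^p$. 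A parallel argument for a non-invertible linear map that is finite-to-one on $V$ (for instance a suitable coordinate projection) lets one pass freely between $\mathcal F_V$ and $\mathcal F_{T(V)}$, which both enlarges the stock of components that can be fed into the decomposition argument below and makes ``a union of linear subspaces'' a legitimate normal form to reduce to.

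For the assembly, the base case is a single subspace $X\subseteq\mathbb C^d$: one checks that $B_X$ acts on $X^n$ as the scalar $\tfrac{d-\dim X}{n+1}$, so $B_X\in\mathcal L^p$ for $p>\dim X$ — equivalently $\mathcal F_X\cong H^2_{\dim X}$ is $p$-essentially normal, which is Arveson's theorem. For a finite union $V=V_1\cup\cdots\cup V_k$ one has $V^n=\sum_iV_i^n$ and $I_{n+1}(V)=\bigcap_iI_{n+1}(V_i)$, so $P_{I_{n+1}(V)}\le P_{I_{n+1}(V_i)}$ for each $i$; writing $v\in V^n$ as $v=\sum_iv_i$ with $v_i\in V_i^n$, the leakage of $v$ is then dominated by $\sum_i\langle B_{V_i}v_i,v_i\rangle$, whence $B_V$ is controlled by $\bigoplus_iB_{V_i}$ provided the decomposition $v=\sum_iv_i$ can always be chosen with $\|v_i\|\le C\|v\|$ for a constant $C$ independent of $n$. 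That uniform bounded-decomposition property is exactly a lower bound, uniform in $n$, on the angles between the subspaces $V_i^n\subseteq(\mathbb C^d)^n$ relative to their mutual intersections, and proving it is \textbf{the heart of the matter and the main obstacle}. When the linear spans $\operatorname{span}V_i$ are pairwise disjoint it is immediate: $V_i^n$ and $V_j^n$ then sit in distinct orthogonal summands of $(\bigoplus_i\operatorname{span}V_i)^n$, so they are actually orthogonal, $\mathcal F_V=\bigoplus_i\mathcal F_{V_i}$ as Hilbert modules, and $p$-essential normality for $p>\max_i\dim V_i=\dim V$ passes from the components to $V$ (and, via the morphism-invariance results, from components that are merely linearly isomorphic to known examples). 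When the $V_i$ are linear subspaces the intersections may be nontrivial, but an explicit computation in a basis adapted to the configuration $\{X_i\}$ shows that the required angles stay bounded below uniformly in $n$, proving the conjecture for unions of linear subspaces. For an arbitrary decomposition $V=V_1\cup V_2$, by contrast, the angle between $V_1^n$ and $V_2^n$ can tend to zero as $n\to\infty$, the domination of $B_V$ by $B_{V_1}\oplus B_{V_2}$ breaks down, and a genuinely new idea is needed.
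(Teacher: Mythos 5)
Your reduction to the ``leakage operator'' is correct and is exactly the paper's Lemma~\ref{lem:refinement} in slightly different clothes: writing $B_{ii}=PS_i^*(I-P)S_i|_{\mathcal F_V}=[P,S_i]^*[P,S_i]|_{\mathcal F_V}$, your single positive operator $B=\sum_i B_{ii}$ lies in $\mathcal L^p$ iff each $[P,S_i]$ lies in $\mathcal L^{2p}$, which is the criterion the paper uses, and the dimension count $\dim V^n=O(n^{\dim V-1})$ together with Arveson's $O(1/n)$ bound on the graded blocks of $[S_i,S_j^*]$ handles the main term for $p>\dim V$. Your assembly principle (bounded decomposition $v=\sum v_i$ with uniform constant) is essentially the closed-range hypothesis of Lemma~\ref{lem:sum-closed-comm} and Proposition~\ref{prop:varieties}. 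So the overall architecture is the paper's.

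There are, however, two genuine problems. First, your treatment of the disjoint-spans case is wrong as stated: if $\mathrm{span}(V_i)\cap\mathrm{span}(V_j)=\{0\}$ the subspaces $V_i^n$ and $V_j^n$ are \emph{not} orthogonal in $(\mathbb C^d)^n$ --- disjointness is not orthogonality, and the symmetric power of an internal (non-orthogonal) direct sum does not split orthogonally. (Take $V_1=\mathbb C(1,0)$, $V_2=\mathbb C(1,1)$; then $\langle (1,0)^{\otimes 2},(1,1)^{\otimes 2}\rangle=1$.) What is actually true, and what the paper's Theorem~\ref{thm:disjoint-varieties} proves, is the quantitative estimate $\cos(V_i^k,V_j^k)\le c^k<1$, which gives asymptotic near-orthogonality and hence closedness of $\mathcal F_{V_1}+\cdots+\mathcal F_{V_k}$; the identity $\mathcal F_V=\bigoplus_i\mathcal F_{V_i}$ only holds after one first uses a linear map to move the picture to a configuration with pairwise orthogonal spans, and making that legitimate is precisely the content of Proposition~\ref{prop:quantitative} and Theorem~\ref{thm:linear_map}, which require showing the induced map $\tilde A$ is a unitary plus trace class. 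You gesture at this reduction but then assert orthogonality in the original picture, which is the gap.

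Second, in two places you wave at a hard external input without realizing it. The claim that a linear isomorphism $T:V\to W$ transfers Schatten-class membership ``because the condition numbers cancel'' hides the fact that the induced operator $\tilde T$ on the quotient modules must be \emph{bounded and invertible globally}, not just block-by-block --- the block norms can blow up exponentially, and the non-trivial assertion that they do not is Hartz's theorem, which the paper's Theorem~\ref{thm:isomorphism} cites and relies on; ``they cancel in pairs'' is not an argument. Similarly, your claim that ``an explicit computation in a basis adapted to the configuration'' gives a uniform-in-$n$ angle bound for a union of linear subspaces is exactly Hartz's Theorem~\ref{thm:sum-subspaces}, a delicate result, not a routine computation. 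So the structure of your proposal is right and your identification of the uniform angle bound as the crux is correct, but the disjoint-spans step has a real error (orthogonality claim), and the two deepest ingredients are asserted rather than supplied.
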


Note that the essential normality of $\mathcal{F}_{V}$ is independent of the ambient space $\mathbb{C}^{d}$ (and in particular of the
dimension $d$) in which we choose to (isometrically) embed $V$ (see \cite[Remark 8.1]{DRS11}). 

Conjecture \ref{conj} originated with Arveson's investigation of the curvature invariant of a commuting $d$-tuple \cite{Arv00, Arv02}. In the past decade, it has drawn a lot of attention, for example in the papers \cite{Arv05,Arv07,Dou06a,Dou06b,DS11,DW12,Esc11,GW08,Ken12,Sha11}, which deal directly with this conjecture. We also wish to mention two recent papers, \cite{DW11} and \cite{FX12}, which treat the essential normality of a principal ideal generated by a (not necessarily homogeneous) polynomial. These papers are worth mentioning, not only because the problem they treat is closely related, but also because they introduce promising analytic techniques that are quite different from previous approaches to the general problem of essential normality.

The main result of \cite[Section 7.3]{DRS11} is that if $V$ and $W$ are
``tractable" homogeneous varieties, and if
$A$ is an invertible linear map that maps $W$ onto $V$ that is
isometric on $W$, then the map $f\mapsto f\circ A$ is an isomorphism between the algebras
$\mathcal{A}_{V}$ and $\mathcal{A}_{W}$ \cite[Theorem 7.17]{DRS11}.
Furthermore, it was shown that this isomorphism is implemented by
a similarity $\tilde{A}^{*}$, i.e.
\[
\varphi(M_{f})=\tilde{A}^{*}M_{f}(\tilde{A}^{*})^{-1},
\]
where $\tilde{A}:\mathcal{F}_{W}\to\mathcal{F}_{V}$ is an invertible
bounded linear map satisfying
\[
\tilde{A}k_{\lambda}=k_{A\lambda}.
\]
Recently, in \cite{Har12}, Hartz was able to prove a stronger version of this result that does not require the varieties to be tractable. We will require this result for what follows.

In this paper, we study the Arveson-Douglas conjecture for submodules of the form $\mathcal{F}_V$, where $V$ is a homogeneous variety in $\mathbb{B}_d$.
In Section \ref{sec:maps}, we prove that if $W$ is a homogeneous variety in $\mathbb{B}_{d'}$, for some positive integer $d'$, and if $\mathcal{A}_V$ is isomorphic to $\mathcal{A}_W$, then $\mathcal{F}_V$ is $p$-essentially normal if and only if  $\mathcal{F}_W$ is $p$-essentially normal. We also establish a similar result for maps between varieties that are not necessarily isomorphic.

In Section \ref{sec:decomposability}, we consider when it is possible to decompose $V$ as $V=V_1\cup \ldots \cup V_n$, where $V_1,\ldots,V_n$ are homogeneous varieties in $\mathbb{C}^d$ with the property that the algebraic sum $\mathcal{F}_{V_1} + \ldots + \mathcal{F}_{V_n}$ is closed. This is a geometric analogue of the notion of the decomposability of a submodule that was introduced in \cite{Ken12}. We relate this geometric notion of decomposability to the problem of establishing the $p$-essential normality of the submodule $\mathcal{F}_V$.

Finally, in Section \ref{sec:applications}, we apply the results from Section 2 and Section 3 to establish the Arveson-Douglas conjecture for two new classes of examples. Using Hartz's result from \cite{Har12}, we prove that  $\mathcal{F}_V$ satisfies the Arveson-Douglas conjecture when $V$ decomposes as the union of linear subspaces. We also prove that $\mathcal{F}_V$ satisfies the Arveson-Douglas conjecture when $V$ decomposes into varieties $V_1,\ldots,V_n$ such that each $\mathcal{F}_{V_i}$ satisfies the conjecture, and $\mathrm{span}(V_i) \cap \mathrm{span}(V_j) = {0}$ whenever $i \ne j$. These are perhaps the simplest classes of examples for which the conjecture was not previously known to be true.

\section{Linear maps between varieties and essential normality} \label{sec:maps}

\subsection{Invertible maps}\label{subsec:invertible}
\begin{thm}\label{thm:isomorphism}
Let $V$ and $W$ be homogeneous varieties in $\mathbb{B}_{d}$ and
$\mathbb{B}_{d'}$ respectively. Suppose the algebras $\mathcal{A}_{V}$
and $\mathcal{A}_{W}$ are algebraically isomorphic. Then for $p\geq1$, $\mathcal{F}_{V}$
is $p$-essentially normal if and only if $\mathcal{F}_{W}$ is $p$-essentially
normal.\end{thm}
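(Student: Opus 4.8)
The plan is to reduce $p$-essential normality to a statement about the algebra $\mathcal{A}_V$ (more precisely, to the behaviour of commutators modulo the compacts), and then transport that statement across the isomorphism. First I would recall that by a theorem of Arveson and the general theory of the $d$-shift, the commutators $[S_i^V, S_j^{V*}]$ all lie in $\mathcal{L}^p$ for $p > d$ automatically only in trivial cases, so essential normality is genuinely extra information; but what we \emph{do} know for free is that each $S_i^V$ is essentially normal modulo $\mathcal{K}$ in the weak sense that the image of $\mathcal{A}_V$ in the Calkin algebra is commutative — this follows because the self-commutators $[S_i^V, S_j^{V*}]$ are always compact (this is the $p = \infty$, or rather the "essentially normal" case, known from \cite{Arv98} for the full $d$-shift and passing to the coinvariant subspace $\mathcal{F}_V = \mathcal{F}_I$). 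Then the key observation is that $p$-essential normality of $\mathcal{F}_V$ can be characterized purely in terms of the operator space structure, or better, in terms of which operators in the unital algebra generated by $S^V$ and $S^{V*}$ lie in $\mathcal{L}^p$.

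The heart of the argument should be the following: by the discussion preceding the theorem (the Davidson--Ramsey--Shalit / Hartz results), an algebraic isomorphism $\mathcal{A}_V \cong \mathcal{A}_W$ is automatically bounded and is in fact \emph{implemented by a similarity}. That is, there is a bounded invertible $T : \mathcal{F}_W \to \mathcal{F}_V$ with $\varphi(M_f) = T^{-1} M_f T$ for the corresponding multipliers, coming from a linear map $A$ with $\tilde A k_\lambda = k_{A\lambda}$ and $T = \tilde A$ or $\tilde A^*$ as appropriate. Granting this, the strategy is: show that $S_i^V$ differs from $T (S_j^W) T^{-1}$ (for the appropriate linear combination of coordinates dictated by $A$) by at most a compact perturbation, or handle the coordinate change directly; then use that $\mathcal{L}^p$ is a two-sided ideal invariant under $X \mapsto TXT^{-1}$ (since $T, T^{-1}$ are bounded) together with the fact that the self-commutators are already compact, to conclude that membership of all $[S_i, S_j^*]$ in $\mathcal{L}^p$ is preserved. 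Concretely, I expect to write $S^V = A \cdot (T S^W T^{-1})$ in the sense that the tuple $S^V$ is obtained from the similarity-transported tuple $T S^W T^{-1}$ by an invertible linear change of the coordinate operators (because $A$ intertwines the coordinate functions up to linear combination), and then verify that $p$-essential normality is stable under (i) invertible linear reshuffling of a commuting tuple whose self-commutators are compact, and (ii) simultaneous similarity by a bounded invertible operator. Both (i) and (ii) are short: for (ii), $[TXT^{-1}, (TYT^{-1})^*] = TXT^{-1}(T^*)^{-1}Y^*T^* - \ldots$, and after a further compact correction coming from $[(T^*)^{-1}, \text{stuff}]$ being compact (here one uses that the difference of $(TXT^{-1})^*$ and $T^{-1*}X^*T^*$ is handled by absorbing $T^*T - I$ type terms, all bounded), one reduces to $T[X,Y^*]T^{-1} \in \mathcal{L}^p$, which holds iff $[X,Y^*] \in \mathcal{L}^p$.

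For (i): if $(R_1,\ldots,R_m)$ is a commuting tuple with $[R_i,R_j^*] \in \mathcal{L}^p$ for all $i,j$, and $R_i' = \sum_k c_{ik} R_k$ for an invertible matrix $C$, then $[R_i', R_j'^*] = \sum_{k,\ell} c_{ik}\overline{c_{j\ell}}[R_k,R_\ell^*] \in \mathcal{L}^p$, since $\mathcal{L}^p$ is a linear space; invertibility of $C$ gives the converse, so this step is essentially free. The genuinely delicate point, and the one I expect to be the main obstacle, is the first reduction: controlling the difference between the quotient tuple $S^V$ and the similarity-transported tuple $T S^W T^{-1}$ when $V$ sits in $\mathbb{C}^d$ and $W$ in $\mathbb{C}^{d'}$ with possibly $d \ne d'$. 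One must use the remark quoted in the excerpt that essential normality of $\mathcal{F}_V$ is independent of the ambient dimension, so that without loss of generality $d = d'$ after isometrically embedding, and then that the linear map $A$ realizing the isomorphism (from Hartz's strengthening of \cite[Theorem 7.17]{DRS11}) carries $W$ onto $V$; after that the coordinate functions $z_i|_V$ pull back under $A$ to linear combinations of $z_j|_W$, giving precisely the change-of-tuple situation in (i) composed with the similarity in (ii). So I would organize the proof as: (1) reduce to a common ambient $\mathbb{C}^d$ and invoke the similarity/linear-map description of the isomorphism; (2) the ambient self-commutators $[S_i^V, S_j^{V*}]$ are compact, unconditionally; (3) prove the two stability lemmas (invertible linear reshuffle; bounded similarity); (4) chain them together. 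The only place requiring real care is making sure the "compact error terms" in step (2)/(3) interactions do not accidentally need to be in $\mathcal{L}^p$ rather than merely compact — and here one leans on the fact that $\mathcal{K} \cdot \mathcal{L}^p \subseteq \mathcal{L}^p$ is false but $\mathcal{L}^p + \mathcal{K}$-type bookkeeping is unnecessary because all genuinely error-producing terms are products of a bounded operator with something already known to be in $\mathcal{L}^p$, while the compactness of self-commutators is only used to know the transported tuple is a \emph{commuting-modulo-compact} tuple, not to gain $\mathcal{L}^p$ membership.
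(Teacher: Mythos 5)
Your proposal correctly identifies the structural inputs (Davidson--Ramsey--Shalit plus Hartz: the isomorphism is implemented by a bounded invertible $\tilde{A}$ coming from a linear map $A$ with $\tilde{A}k_\lambda = k_{A\lambda}$), and your step (i) — an invertible linear reshuffling of the coordinate tuple preserves $\mathcal{L}^p$ of self-commutators — is correct and does appear in the paper (``letting $f$ and $g$ be suitable linear combinations of the coordinate functions''). But your step (ii) has a genuine gap, and it is exactly the crux of the theorem. You claim that for a bounded invertible $T$ one can reduce $[TXT^{-1}, (TYT^{-1})^*] \in \mathcal{L}^p$ to $T[X,Y^*]T^{-1} \in \mathcal{L}^p$ ``after absorbing $T^*T - I$ type terms, all bounded.'' This is false. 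Writing $R = T^*T$, one has
\[
[TXT^{-1},(TYT^{-1})^*] \;=\; T\bigl(XR^{-1}Y^* - R^{-1}Y^*RXR^{-1}\bigr)T^*,
\]
which differs from $T[X,Y^*]T^{-1} = T(XY^*-Y^*X)R^{-1}T^*$ by terms involving commutators with $R^{-1}$ that are merely bounded, not in $\mathcal{L}^p$, and cannot be absorbed. In short, $p$-essential normality is not a bounded-similarity invariant of a tuple, and no amount of bookkeeping with compact error terms rescues the generic conjugation computation — compactness of self-commutators does not yield $\mathcal{L}^p$ membership of these errors.

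What actually makes the paper's argument close up is a structural identity you have not invoked: $\tilde{A}^*$ is itself an intertwiner, $\tilde{A}^*M_f = M_{f\circ A}\tilde{A}^*$, and taking adjoints gives
\[
M_f^*\tilde{A} = \tilde{A}\,M_{f\circ A}^*, \qquad \text{hence} \qquad M_{f\circ A^*}^*\tilde{A} = \tilde{A}\,M_{f\circ A^*A}^*.
\]
This says the adjoint of a multiplier is carried by $\tilde{A}$ to the adjoint of another multiplier (for the linear polynomial $f\circ A^*A$), which is special to operators of composition type and has no analogue for a general bounded similarity $T$. Using this, the paper derives the \emph{exact} identity
\[
[M_{f\circ A^*}^*,\,M_{g\circ A^*}] \;=\; \tilde{A}\,[M_{f\circ A^*A}^*,\,M_g]\,\tilde{A}^{-1},
\]
with no error term at all, from which the $\mathcal{L}^p$ equivalence is immediate. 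Your outline misses this identity, and without it, step (ii) fails. The delicate point is not, as you suggest, the reduction to a common ambient dimension — that part is routine — but precisely the commutator algebra that you attempt to push through by invoking a nonexistent similarity-invariance.
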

\begin{proof}
Since $\mathcal{A}_{V}$ and $\mathcal{A}_{W}$ are isomorphic, by
results of \cite{DRS11} (Proposition 7.1, Theorem 7.4 and Proposition 8.3) there is a linear transformation $A:\mathbb{C}^{d}\to\mathbb{C}^{d'}$ that maps $V$ bijectively onto $W$. It now follows from \cite{Har12} (Proposition 2.5 and Corollary 5.8) that there is an invertible linear map $\tilde{A}:\mathcal{F}_{V}\to\mathcal{F}_{W}$
defined by 
\[
\tilde{A}k_{\lambda}=k_{A\lambda} , \quad \lambda \in V.
\]
It follows that if $f$ is a polynomial in $\mathcal{F}_V$ then
\[
\tilde{A}f=f\circ A^{*}.
\]
The adjoint $\tilde{A}^{*}:\mathcal{F}_{W}\to\mathcal{F}_{V}$
is defined by
\[
\tilde{A}^{*}f=f\circ A,\quad f\in\mathcal{F}_{W}.
\]
Note that for a polynomial $f$ in $\mathcal{F}_{V}$, $\tilde{A}M_{f}=M_{f\circ A^{*}}\tilde{A}$,
and similarly for a  polynomial $f$ in $\mathcal{F}_{W}$, $\tilde{A}^{*}M_{f}=M_{f\circ A}\tilde{A}^{*}$. 

Fix polynomials $f$ and $g$ in $\mathcal{F}_{V}$. Then using the
identities $M_{g\circ A^{*}}=\tilde{A}M_{g}\tilde{A}^{-1}$, $M_{f\circ A^{*}}^{*}\tilde{A}=\tilde{A}M_{f\circ A^{*}A}^{*}$
 and $M_{f\circ A^{*}}^{*}=\tilde{A}M_{f\circ A^{*}A}^{*}\tilde{A}^{-1},$
we have 
\begin{eqnarray*}
M_{f\circ A^{*}}^{*}M_{g\circ A^{*}} & = & M_{f\circ A^{*}}^{*}\tilde{A}M_{g}\tilde{A}^{-1}\\
 & = & \tilde{A}M_{f\circ A^{*}A}^{*}M_{g}\tilde{A}^{-1}\\
 & = & \tilde{A}M_{g}M_{f\circ A^{*}A}^{*}\tilde{A}^{-1}+\tilde{A}[M_{f\circ A^{*}A},M_{g}]\tilde{A}^{-1}\\
 & = & \tilde{A}M_{g}\tilde{A}^{-1}\tilde{A}M_{f\circ A^{*}A}^{*}\tilde{A}^{-1}+\tilde{A}[M_{f\circ A^{*}A},M_{g}]\tilde{A}^{-1}\\
 & = & M_{g\circ A^{*}}M_{f\circ A^{*}}^{*}+\tilde{A}[M_{f\circ A^{*}A},M_{g}]\tilde{A}^{-1}.
\end{eqnarray*}
Therefore,
\[
[M_{f\circ A^{*}}^{*},M_{g\circ A^{*}}]=\tilde{A}[M_{f\circ A^{*}A},M_{g}]\tilde{A}^{-1}.
\]
and hence $[M_{f\circ A^{*}}^{*},M_{g\circ A^{*}}]$ belongs to $\mathcal{L}^{p}$
if and only if $[M_{f\circ A^{*}A}^{*},M_{g}]$ belongs to $\mathcal{L}^{p}$.
Letting $f$ and $g$ be suitable linear combinations of the coordinate functions one sees that $\mathcal{F}_W$ is $p$-essentially normal if and only if $\mathcal{F}_V$ is $p$-essentially normal. 
\end{proof}

\subsection{Maps that are not necessarily invertible}\label{subsec:not_invertible}
\begin{prop}\label{prop:quantitative}
Let $V$ and $W$ be homogeneous varieties in $\mathbb{B}_{d}$ and $\mathbb{B}_{d'}$, respectively, with decompositions into (not necessarily irreducible)
subvarieties $V=V_{1}\cup\ldots\cup V_{k}$ and $W=W_{1}\cup\ldots\cup W_{k}$ with the property that $\mathrm{span}(W_{i})\cap\mathrm{span}(W_{j})=\{0\}$
whenever $i\ne j$. Suppose that there is a linear map $A:\mathbb{C}^{d}\to\mathbb{C}^{d'}$
such that $A(V_{i})=W_{i}$ and such that the
restriction of $A$ to $\mathrm{span}(V_{i})$ is isometric for all $i$,  $1\leq i\leq k$. Then the map 
defined by\textup{
\[
\tilde{A}k_\lambda  =k_{A\lambda},\quad \lambda\in V
\]
} extends to a bounded linear map $\tilde{A}:\mathcal{F}_{V}\to\mathcal{F}_{W}$. Moreover, $\tilde{A}$ is the sum of a unitary operator and a trace class operator.
\end{prop}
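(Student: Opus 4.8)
The plan is to argue level by level via the gradings $\mathcal{F}_{V}=\bigoplus_{n\ge 0}V^{n}$ and $\mathcal{F}_{W}=\bigoplus_{n\ge 0}W^{n}$. Since $A^{\otimes n}$ sends $\lambda^{n}$ to $(A\lambda)^{n}$ and $A(V)=W$, the only candidate for $\tilde A$ is $\bigoplus_{n}\tilde A_{n}$, where $\tilde A_{n}$ is (a suitable conjugate of) the restriction of $A^{\otimes n}$ to $V^{n}$, and this $\tilde A_{n}$ maps $V^{n}$ \emph{onto} $W^{n}$. The proposition then reduces to two assertions about these maps: that $\sup_{n}\|\tilde A_{n}\|<\infty$, which gives that $\tilde A$ is a well-defined bounded operator onto $\mathcal{F}_{W}$ with $\tilde A k_{\lambda}=k_{A\lambda}$; and that $\|\tilde A_{n}^{*}\tilde A_{n}-I_{V^{n}}\|$ tends to $0$ \emph{geometrically} in $n$. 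Granting the second, one combines it with the polynomial bound $\dim V^{n}=O(n^{d-1})$ to conclude that $\tilde A^{*}\tilde A-I$ is trace class; a symmetric argument on the $W$ side gives that $\tilde A\tilde A^{*}-I$ is trace class; and then, writing $\tilde A=U\lvert\tilde A\rvert$ for the polar decomposition, the functional calculus gives $\lvert\tilde A\rvert=(I+(\tilde A^{*}\tilde A-I))^{1/2}=I+(\text{trace class})$, so $\tilde A=U+(\text{trace class})$ with $U$ a partial isometry, which is a unitary once one knows $\tilde A$ has Fredholm index $0$, that is, $\dim V^{n}=\dim W^{n}$ for every $n$ (automatic for large $n$ from the decay estimate, and holding for the remaining finitely many $n$, for example when $A$ is injective on $\mathrm{span}(V)$).

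The geometric input driving the decay is two-sided. Since $\mathrm{span}(W_{i})\cap\mathrm{span}(W_{j})=\{0\}$ and the ambient space is finite dimensional, $c_{ij}:=\|P_{\mathrm{span}(W_{i})}P_{\mathrm{span}(W_{j})}\|<1$ for $i\ne j$. I would transfer this to the $V$ side using that $A$ is isometric on each $\mathrm{span}(V_{i})$: for $v=v_{i}+v_{j}$ with $v_{i}\in\mathrm{span}(V_{i})$, $v_{j}\in\mathrm{span}(V_{j})$, one has $\|Av\|^{2}=\|Av_{i}+Av_{j}\|^{2}\ge(1-c_{ij})(\|Av_{i}\|^{2}+\|Av_{j}\|^{2})=(1-c_{ij})(\|v_{i}\|^{2}+\|v_{j}\|^{2})$, so $A$ is bounded below on $\mathrm{span}(V_{i})+\mathrm{span}(V_{j})$; comparing this with $\|Av\|\le\|A\|\,\|v\|$ shows $\mathrm{span}(V_{i})\cap\mathrm{span}(V_{j})=\{0\}$ and yields a constant $c'_{ij}:=\|P_{\mathrm{span}(V_{i})}P_{\mathrm{span}(V_{j})}\|<1$. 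The second ingredient is the elementary identity $\|P_{X^{n}}P_{Y^{n}}\|=\|P_{X}P_{Y}\|^{n}$ for subspaces $X,Y$ of a finite dimensional space, which holds because $P_{X^{n}}$ is the restriction to the symmetric subspace of $P_{X}^{\otimes n}$. Setting $c:=\max_{i\ne j}\max(c_{ij},c'_{ij})<1$, these give $\|P_{V_{i}^{n}}P_{V_{j}^{n}}\|\le c^{n}$ and $\|P_{W_{i}^{n}}P_{W_{j}^{n}}\|\le c^{n}$ for all $i\ne j$: the symmetric powers of the pieces are orthogonal up to exponentially small errors.

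The estimate is then bookkeeping. For $n$ with $(k-1)c^{n}<\tfrac12$, near orthogonality forces $\sum_{i}V_{i}^{n}$ to be an internal direct sum, so every $f\in V^{n}$ has a unique decomposition $f=\sum_{i}f_{i}$ with $f_{i}\in V_{i}^{n}$ and $\sum_{i}\|f_{i}\|^{2}\le(1-(k-1)c^{n})^{-1}\|f\|^{2}$. Since $A^{\otimes n}$ is isometric on $\mathrm{span}(V_{i})^{\otimes n}\supseteq V_{i}^{n}$, the vectors $g_{i}:=A^{\otimes n}f_{i}\in W_{i}^{n}$ satisfy $\|g_{i}\|=\|f_{i}\|$ and $\tilde A_{n}f=\sum_{i}g_{i}$. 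Expanding $\|f\|^{2}$ and $\|\tilde A_{n}f\|^{2}$, the diagonal terms coincide and each off-diagonal term is bounded by $c^{n}\|f_{i}\|\|f_{j}\|$, so $\bigl|\,\|\tilde A_{n}f\|^{2}-\|f\|^{2}\,\bigr|\le 4(k-1)c^{n}\|f\|^{2}$; hence $\|\tilde A_{n}^{*}\tilde A_{n}-I_{V^{n}}\|\le 4(k-1)c^{n}$, and in particular $\sup_{n}\|\tilde A_{n}\|<\infty$. Summing $\|\tilde A_{n}^{*}\tilde A_{n}-I_{V^{n}}\|_{1}\le(\dim V^{n})\cdot 4(k-1)c^{n}=O(n^{d-1}c^{n})$ over $n$, the finitely many exceptional small $n$ contributing only finite rank operators, shows $\tilde A^{*}\tilde A-I$ is trace class. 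For large $n$ the bound also forces $\tilde A_{n}$ to be injective, hence a bijection $V^{n}\to W^{n}$, so $\tilde A_{n}\tilde A_{n}^{*}$ has the same nonzero eigenvalues as $\tilde A_{n}^{*}\tilde A_{n}$, and the identical argument run on the $W$ side shows $\tilde A\tilde A^{*}-I$ is trace class; combined with the first paragraph this finishes the proof.

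The step I expect to be the main obstacle is this geometric decay estimate — producing a single constant $c<1$ with $\|\tilde A_{n}^{*}\tilde A_{n}-I_{V^{n}}\|=O(c^{n})$ — and in particular the transfer of the angle bound from the $W$ side to the $V$ side, which is exactly where the hypothesis $\mathrm{span}(W_{i})\cap\mathrm{span}(W_{j})=\{0\}$ is used quantitatively, together with the tensor-power identity $\|P_{X^{n}}P_{Y^{n}}\|=\|P_{X}P_{Y}\|^{n}$; without a uniform $c<1$ one could only conclude boundedness of $\tilde A$. A secondary point needing care is the passage from this operator-norm decay to a \emph{trace-norm} decay, which rests on the polynomial growth $\dim V^{n},\dim W^{n}=O(n^{d-1})$ of the graded pieces.
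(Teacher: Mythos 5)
Your proof is correct and takes essentially the same route as the paper: grade by degree, show the projections onto $V_{i}^{n}$ (and $W_{i}^{n}$) become nearly orthogonal at a geometric rate $c^{n}$, deduce $\|\tilde A_{n}^{*}\tilde A_{n}-I\|=O(c^{n})$, and combine with $\dim V^{n}=O(n^{d-1})$ and the polar decomposition to conclude $\tilde A = U + (\text{trace class})$. Where the paper cites \cite[Lemma 7.10]{DRS11} and the Friedrichs cosine for the geometric decay, you derive it from scratch via $\|P_{X^{n}}P_{Y^{n}}\|=\|P_{X}P_{Y}\|^{n}$; you are also more explicit than the paper in two places it glosses over, namely (i) that $\mathrm{span}(V_{i})\cap\mathrm{span}(V_{j})=\{0\}$ and hence a uniform angle bound on the $V$-side is actually a consequence of the hypotheses (the isometry of $A$ on each span plus the disjointness of the $\mathrm{span}(W_{i})$), and (ii) that the partial isometry $U$ is only a finite-rank perturbation of a unitary once one checks the kernel and cokernel of $\tilde A$ match. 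On (ii), you hedge with ``for example when $A$ is injective on $\mathrm{span}(V)$'' --- but in fact $A$ \emph{is} automatically injective on $\mathrm{span}(V)$: if $Av=0$ and $v=\sum v_{i}$ with $v_{i}\in\mathrm{span}(V_{i})$, then $\sum Av_{i}=0$ forces each $Av_{i}=0$ by disjointness of the $\mathrm{span}(W_{i})$, hence each $v_{i}=0$ by the isometry on $\mathrm{span}(V_{i})$; so you can drop the hedge and assert this outright.
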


\begin{rem*}
If $V_{i}$ is irreducible, and if $A$ is {\em isometric} on  $V_{i}$, then it follows from \cite[Proposition 7.6]{DRS11} that the restriction of $A$ to $\mathrm{span}(V_{i})$ is automatically isometric.\end{rem*}

\begin{proof}
It suffices to prove the lemma for the case when $V$ and $W$ are
unions of nontrivial subspaces (see the first paragraph of \cite[Theorem 7.16]{DRS11}). Hence we can suppose that $\{0\}\ne V_{i}=\mathrm{span}(V_{i})$
and $\{0\}\ne W_{i}=\mathrm{span}(W_{i})$.

The fact that the operator $\tilde{A}$ is bounded follows from the
results in \cite{Har12}. However, in order to prove that $\tilde{A}$
is the sum of a unitary operator and a trace class operator, we will
need to obtain quantitative estimates. If $M$ and $N$ are two subspaces of a Hilbert space then we denote (following \cite{Fri37})
\[
\cos(M,N) = \sup \{|\langle x, y \rangle | : x \in M \ominus (M\cap N), y \in N \ominus (M \cap N), \|x\|=\|y\|=1\}.
\]
By the finite-dimensionality of $V_{1},\ldots,V_{n}$, $\mathrm{cos}(V_{i},V_{j})<1$ and  $\mathrm{cos}(W_{i},W_{j})<1$ whenever $i\ne j$. Let 
\[
c=\max(\{\cos(V_{i},V_{j})\mid i\ne j\}\cup\{\cos(W_{i},W_{j})\mid i\ne j\}).
\]
Then $0\leq c<1$. For $v$ in $V^{n}$ and $w$ in $W^{n}$, write
$v=\sum_{i=1}^{k}v_{i}$ and $w=\sum_{i=1}^{k}w_{i}$, where each
$w_{i}$ belongs to $W_{i}^{n}$. Then as in the proof of \cite[Lemma 7.10]{DRS11}, for sufficiently large $n$, 
\begin{equation}
(1-kc^{n})\|v\|^{2}\leq\sum_{i=1}^{k}\|v_{i}\|^{2}\leq(1+kc^{n})\|v\|^{2}\label{eq:est-V}
\end{equation}
and
\begin{equation}
(1-kc^{n})\|w\|^{2}\leq\sum_{i=1}^{k}\|w_{i}\|^{2}\leq(1+kc^{n})\|w\|^{2}.\label{eq:est-W}
\end{equation}

The space $\mathcal{F}_{V}$ decomposes as $\mathcal{F}_{V}=\oplus_{n=0}^{\infty}V^{n}$,
and $\tilde{A}$ is defined on $V^{n}$ by setting 
\[
\tilde{A}\lambda^{n}=(A\lambda)^{n},\quad\lambda\in V,
\]
and extending by linearity. Since $W^{n}=W^{\otimes n}$, the operator
$\tilde{A}$ can also be realized as
\[
\tilde{A}=\oplus_{n=0}^{\infty}A^{\otimes n}.
\]
Therefore, by the hypothesis that $A$ is isometric on each $V_{i}$,
the restriction of $\tilde{A}$ to $V_{i}^{n}$ is a unitary from
$V_{i}^{n}$ to $W_{i}^{n}$.

As above, for $v$ in $V^{n}$ write $v=\sum_{i=1}^{k}v_{i}$, where
each $v_{i}$ belongs to $V_{i}^{n}$. Then by (\ref{eq:est-V}) and
(\ref{eq:est-W}), for sufficiently large $n$,
\begin{eqnarray}
\|\tilde{A}v\|^{2} & = & \|\sum_{i=1}^{k}A^{\otimes n}v_{i}\|^{2}\label{eq:est2}\\
 & \leq & \frac{1}{1-kc^{n}}\sum_{i=1}^{k}\|A^{\otimes n}v_{i}\|^{2}\nonumber \\
 & = & \frac{1}{1-kc^{n}}\sum_{i=1}^{k}\|v_{i}\|^{2}\nonumber \\
 & \leq & \frac{1+kc^{n}}{1-kc^{n}}\|v\|^{2},\nonumber 
\end{eqnarray}
By a similar argument, for sufficiently large $n$,
\begin{equation}
\|\tilde{A}v\|^{2}\geq\frac{1-kc^{n}}{1+kc^{n}}\|v\|^{2}.\label{eq:est3}
\end{equation}

Let $\tilde{A}=U|\tilde{A}|$ be the polar decomposition of $\tilde{A}$.
Since $\tilde{A}$ is graded, i.e. $\tilde{A}(V^{n})=W^{n}$, it follows
that $U$ and $|\tilde{A}|$ are also graded. Write $\tilde{A}=U+U(|\tilde{A}|-I)$.
Since $A$ (and hence $\tilde{A}$) is not necessarily invertible,
the partial isometry $U$ is not necessarily a unitary. However, by
(\ref{eq:est3}), the restriction of $\tilde{A}$ to $V^{n}$ is invertible
for sufficiently large $n$, so $U$ is a finite rank perturbation
of a unitary. Hence we will be done once we show that $|\tilde{A}|-I$
is a trace class operator.

The inequalities (\ref{eq:est2}) and (\ref{eq:est3}) are equivalent
to the existence of a constant $M>0$ such that for $v$ in $V^{n}$,
\[
(1-Mc^{n})\|v\|\leq\||\tilde{A}|v\|\leq(1+Mc^{n})\|v\|.
\]
Hence the eigenvalues of the restriction of $|\tilde{A}|$ to $V^{n}$
are contained in the interval $[1-Mc^{n},1+Mc^{n}]$, and it follows
that the eigenvalues of the restriction of $|\tilde{A}|-I$ to $V^{n}$
are contained in the interval $[-Mc^{n},Mc^{n}]$. Therefore, since
the dimension of $V^{n}$ is less than $n^{d-1}$, it follows that
$|\tilde{A}|-I$ is a trace class operator.
\end{proof}

\begin{thm}\label{thm:linear_map}
Let $V$ and $W$ be homogeneous varieties in $\mathbb{B}_{d}$ and $\mathbb{B}_{d'}$, respectively, with decompositions into (not necessarily irreducible)
subvarieties $V=V_{1}\cup\ldots\cup V_{k}$ and $W=W_{1}\cup\ldots\cup W_{k}$ with the property that $\mathrm{span}(W_{i})\cap\mathrm{span}(W_{j})=\{0\}$
whenever $i\ne j$. Suppose that there is a linear map $A:\mathbb{C}^{d}\to\mathbb{C}^{d'}$
such that $A(V_{i})=W_{i}$ and such that the
restriction of $A$ to $\mathrm{span}(V_{i})$ is isometric for all $i$,  $1\leq i\leq k$. Then for $p\geq1$, $\mathcal{F}_{W}$ is $p$-essentially
normal if and only if $\mathcal{F}_{V}$ is.
\end{thm}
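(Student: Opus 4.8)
The plan is to mimic the argument of Theorem~\ref{thm:isomorphism}, but using the quantitative information provided by Proposition~\ref{prop:quantitative} in place of invertibility. By Proposition~\ref{prop:quantitative}, the map $\tilde A \colon \mathcal{F}_V \to \mathcal{F}_W$ determined by $\tilde A k_\lambda = k_{A\lambda}$ is bounded and has the form $\tilde A = U + T$, where $U$ is a partial isometry that is a finite-rank perturbation of a unitary and $T$ is trace class; moreover, as in that proof, $\tilde A$ is graded and its restriction to $V^n$ is invertible for all sufficiently large $n$. First I would record the intertwining relations: for a polynomial $f$ in $\mathcal{F}_V$ one has $\tilde A M_f = M_{f\circ A^*}\tilde A$, and for a polynomial $g$ in $\mathcal{F}_W$ one has $\tilde A^* M_g = M_{g\circ A}\tilde A^*$, exactly as in the invertible case (these follow from $\tilde A f = f\circ A^*$ on polynomials and the definition of the module actions).

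Next, fix coordinate functions $f$ and $g$ on $V$ and compute the commutator $[M_{f\circ A^*}^*, M_{g\circ A^*}]$. Writing $\tilde A = U + T$ we do not have a two-sided inverse, but on a co-finite-codimensional graded subspace $\mathcal{F}_V' = \bigoplus_{n\ge N} V^n$ the restriction of $\tilde A$ is bounded below, so it has a bounded left inverse $\tilde A^\dagger$ on its range, and $\tilde A^\dagger$ itself differs from a partial isometry by a trace class operator (since $|\tilde A|^{-1} - I$ is trace class on $\mathcal{F}_V'$, by the eigenvalue estimates in the proof of Proposition~\ref{prop:quantitative}). The cleanest way to organize this is: because $U$ is a finite-rank perturbation of a unitary and $T$ is trace class, essential normality is insensitive to replacing $\tilde A$ by $U$ and to compressing to $\mathcal{F}_V'$; so it suffices to compare the commutators of $M_{f\circ A^*}$ with those of the operators $M_{f\circ A^*A}$ transported through the unitary part. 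Repeating the algebraic manipulation from the proof of Theorem~\ref{thm:isomorphism} verbatim — using $M_{g\circ A^*} = \tilde A M_g \tilde A^{-1}$ and $M_{f\circ A^*}^* = \tilde A M_{f\circ A^* A}^* \tilde A^{-1}$ on the subspace where $\tilde A$ is invertible — yields
\[
[M_{f\circ A^*}^*, M_{g\circ A^*}] = \tilde A [M_{f\circ A^* A}^*, M_g] \tilde A^{-1} + (\text{trace class}),
\]
where the trace class error absorbs all the finite-rank and trace class discrepancies between $\tilde A$ and a unitary.

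From this identity, $[M_{f\circ A^*}^*, M_{g\circ A^*}]$ lies in $\mathcal{L}^p$ if and only if $[M_{f\circ A^* A}^*, M_g]$ does (conjugation by the bounded-with-bounded-inverse operator $\tilde A$ preserves $\mathcal{L}^p$, and adding a trace class operator does not change membership in $\mathcal{L}^p$ for $p \ge 1$). Since $A^*A$ is a positive operator that is the identity on each $\mathrm{span}(V_i)$ — hence the identity on $\mathrm{span}(V) = \sum_i \mathrm{span}(V_i)$, which contains the relevant coordinate directions modulo the ideal — the functions $f \circ A^* A$ restricted to $V$ coincide with $f$ restricted to $V$, so $M_{f\circ A^* A} = M_f$ as operators on $\mathcal{F}_V$. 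Thus the commutator condition for the coordinate multipliers on $\mathcal{F}_W$ is equivalent to that for the coordinate multipliers on $\mathcal{F}_V$, and letting $f, g$ range over suitable linear combinations of coordinate functions gives the result.

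The main obstacle is the non-invertibility of $\tilde A$: one must be careful that the ``inverse'' used in the computation only exists on a co-finite-dimensional subspace, and that the passage from $\tilde A$ to its unitary part $U$, and from $U$ to a genuine unitary, introduces only trace class (indeed finite-rank) errors that are harmless for $\mathcal{L}^p$ membership when $p \ge 1$. A secondary point worth checking carefully is the identification $M_{f\circ A^* A} = M_f$: this requires knowing that $A^*A$ acts as the identity on $\mathrm{span}(V)$, which follows from the hypothesis that $A$ is isometric on each $\mathrm{span}(V_i)$ only after verifying that isometry on each summand forces $A^*A = I$ on the (not necessarily direct) sum $\sum_i \mathrm{span}(V_i)$; in the reduction to unions of subspaces this is transparent, and the general case follows from that reduction as in Proposition~\ref{prop:quantitative}.
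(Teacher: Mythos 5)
Your overall strategy --- use Proposition~\ref{prop:quantitative} to write $\tilde A$ as a unitary plus trace class, push the intertwining relation through, and conclude that the commutators agree up to an $\mathcal{L}^p$-harmless error --- is the right one, and is indeed what the paper does. However, there is a genuine error in the key algebraic step, and the paper's actual computation is cleaner and avoids it.

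The error is the claim that $A^*A$ is the identity on each $\mathrm{span}(V_i)$, and hence on $\mathrm{span}(V)$, so that $M_{f\circ A^*A}=M_f$ on $\mathcal{F}_V$. This is false. The hypothesis that $A$ is isometric on $L_i:=\mathrm{span}(V_i)$ means $\langle A^*Av,w\rangle=\langle v,w\rangle$ for $v,w\in L_i$, i.e.\ $P_{L_i}A^*A\big|_{L_i}=I_{L_i}$; it does \emph{not} force $A^*Av=v$, because $A^*Av$ can have a component orthogonal to $L_i$. Moreover, isometry on each summand does not force isometry on the sum when the summands are not orthogonal. Concretely, take $L_1=\mathrm{span}(e_1)$, $L_2=\mathrm{span}(e_2)$ in $\mathbb{C}^2$, and $A$ with $Ae_1=e_1$, $Ae_2=(e_1+e_2)/\sqrt{2}$; then $A$ is isometric on each $L_i$, the image spans are disjoint lines, but $A^*A\ne I$ and the function $z_1\circ A^*A$ does not agree with $z_1$ on $L_1\cup L_2$. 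Since the final step of your argument hinges on $M_{f\circ A^*A}=M_f$, and you also need $f\mapsto f\circ A^*A$ to be onto the linear functionals (which requires $A$ to be injective on $\mathrm{span}(V)$, again not guaranteed by the hypotheses), the conclusion does not follow.

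The paper's proof sidesteps $A^*A$ entirely. From $M_{f\circ A^*}(U+T)=(U+T)M_f$ one gets $M_{f\circ A^*}=UM_fU^*+(\text{trace class})$ directly (no inverse of $\tilde A$ needed), hence $[M_{f\circ A^*}^*,M_{g\circ A^*}]=U[M_f^*,M_g]U^*+R$ with $R$ trace class. This identity compares commutators on $\mathcal{F}_W$ with commutators on $\mathcal{F}_V$ through a genuine unitary $U$, so one direction is immediate by taking $f,g$ coordinate functions on $\mathbb{C}^d$. For the converse, the paper reduces to $\mathbb{C}^{d'}=\mathrm{span}(W)$, so $A$ is surjective and $A^*$ has a left inverse $B$; taking $f=z_i\circ B$, $g=z_j\circ B$ gives $f\circ A^*=z_i$, $g\circ A^*=z_j$, which hits every coordinate commutator on $\mathcal{F}_W$. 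I would recommend adopting this route: it removes both the $\tilde A^{-1}$ bookkeeping and the unjustified identification $M_{f\circ A^*A}=M_f$.
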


\begin{proof}
Let $\tilde{A}:\mathcal{F}_{V}\to\mathcal{F}_{W}$ be as in Proposition
\ref{prop:quantitative}, so that we can write $\tilde{A}=U+T$, where
$U:\mathcal{F}_{V}\to\mathcal{F}_{W}$ is a unitary operator and $T:\mathcal{F}_{V}\to\mathcal{F}_{W}$
is a trace class operator. Then the identity $\tilde{A}M_{f}=M_{f\circ A^{*}}\tilde{A}$
implies that for every polynomial $f$ in $\mathcal{F}_{V}$, 
\[
M_{f\circ A^{*}}(U+T)=(U+T)M_{f},
\]
and hence that
\[
M_{f\circ A^{*}}=UM_{f}U^{*}+TM_{f}U^{*}-M_{f\circ A}TU^{*}.
\]
Therefore, for polynomials $f$ and $g$ in $\mathcal{F}_{V}$, we
can write
\[
[M_{f\circ A^{*}}^{*},M_{g\circ A^{*}}]=U[M_{f}^{*},M_{g}]U^{*}+R,
\]
where $R$ is a trace class operator. Letting $f$ and $g$ be coordinate functions, it immediately follows that $\mathcal{F}_V$ is $p$-essentially normal if $\mathcal{F}_W$ is. To obtain the converse, assume without loss of generality that $\mathbb{C}^{d'} = \mathrm{span}(W)$. Then $A$ is surjective, hence $A^*$ is left invertible. Let $B$ be a left inverse of $A^*$. Put $f = z_i \circ B$ and $g = z_j \circ B$, where $z_i$ and $z_j$ are considered as coordinate functions in $\mathbb{C}^{d'}$. Then $f$ and $g$ are linear combinations of coordinate function in $\mathbb{C}^d$. Now if $\mathcal{F}_V$ is $p$-essentially normal then $[M^*_f, M_g] \in \mathcal{L}^p$, whence $[M^*_{z_i}, M_{z_j}] = [M_{f\circ A^{*}}^{*},M_{g\circ A^{*}}]  \in \mathcal{L}^p$. Thus $\mathcal{F}_W$ is $p$-essentially normal. 
\end{proof}

\section{Decompositions of varieties and essential normality}\label{sec:decomposability}

\subsection{A refinement of a lemma} \label{subsec:refinement}
\begin{lem}
\label{lem:refinement}Let $I$ be a homogeneous ideal of $\mathbb{C}[z]$
and let $P$ denote the projection onto $\mathcal{F}_{I}$. Then
for $p>\dim I$, $\mathcal{F}_{I}$ is $p$-essentially normal if
and only if the commutator $[S_i,P]$ belongs to $\mathcal{L}^{2p}$
for each $1\leq i\leq d$.\end{lem}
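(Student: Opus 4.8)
The plan is to work in the orthogonal decomposition $H_d^2 = \overline{I}\oplus\mathcal{F}_I$, where $\overline{I}$ is the closure of $I$ in $H_d^2$, and to exploit the block structure of $S_i$ with respect to it. Since $\overline{I}$ is invariant for each $S_i$, the block matrix of $S_i$ is upper triangular; write $S_i = \left(\begin{smallmatrix} A_i & B_i \\ 0 & C_i \end{smallmatrix}\right)$, so that $C_i = S_i^I$ is the quotient operator and $B_i = QS_iP$, where $Q = 1-P$ is the projection onto $\overline{I}$. Two elementary block computations then do most of the work. First, $S_iP - PS_i = \left(\begin{smallmatrix} 0 & B_i \\ 0 & 0 \end{smallmatrix}\right)$, so that for every $q\geq 1$ we have $[S_i,P]\in\mathcal{L}^q$ if and only if $B_i\in\mathcal{L}^q$, with equal Schatten norms. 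Second, comparing the lower-right corners of the block products $S_iS_j^*$ and $S_j^*S_i$ yields
\[
[S_i^I, S_j^{I*}] = P[S_i,S_j^*]P + B_j^*B_i .
\]

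Granting these identities, the equivalence becomes a short bootstrap, provided one knows that $P[S_i,S_j^*]P\in\mathcal{L}^p$ whenever $p>\dim I$. Indeed, if $[S_i,P]\in\mathcal{L}^{2p}$ for each $i$, then every $B_i\in\mathcal{L}^{2p}$, so $B_j^*B_i\in\mathcal{L}^p$ by the noncommutative H\"older inequality, and the displayed identity forces $[S_i^I,S_j^{I*}]\in\mathcal{L}^p$; that is, $\mathcal{F}_I$ is $p$-essentially normal. Conversely, if $\mathcal{F}_I$ is $p$-essentially normal, then taking $i=j$ in the identity gives $B_i^*B_i = [S_i^I,S_i^{I*}] - P[S_i,S_i^*]P\in\mathcal{L}^p$, hence $|B_i| = (B_i^*B_i)^{1/2}\in\mathcal{L}^{2p}$, hence $B_i\in\mathcal{L}^{2p}$ via its polar decomposition, and therefore $[S_i,P]\in\mathcal{L}^{2p}$.

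It remains to show that the compression $P[S_i,S_j^*]P$ lies in $\mathcal{L}^p$ for every $p>\dim I$; this is where the hypothesis on $p$ enters, and it is what refines the classical form of the lemma, in which one knows only that $[S_i,S_j^*]\in\mathcal{L}^p$ and so needs $p>d$. I would argue as follows. Both $[S_i,S_j^*]$ and $P$ are homogeneous of degree $0$ for the grading $H_d^2 = \bigoplus_n\mathbb{H}_n$, so $P[S_i,S_j^*]P$ is block diagonal along this grading and its Schatten $p$-norm is the $\ell^p$-sum of the Schatten $p$-norms of its blocks. On the $n$-th block, $P$ restricts to the projection onto $V^n$, whose rank $\dim V^n = h_I(n)$ is bounded by $C\,n^{\dim I - 1}$ for large $n$, since $\deg h_I = \dim I - 1$; meanwhile a routine computation on the monomial basis (cf.\ \cite{Arv98}) shows that the operator norm of $[S_i,S_j^*]$ restricted to $\mathbb{H}_n$ is $O(n^{-1})$. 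Hence the $p$-th power of the Schatten $p$-norm of the $n$-th block is at most a constant times $n^{\dim I - 1 - p}$, and $\sum_n n^{\dim I - 1 - p}<\infty$ exactly when $p>\dim I$. The main obstacle is this last estimate --- combining the dimension bound $\dim V^n = O(n^{\dim I - 1})$ with the $O(n^{-1})$ decay of the commutators on homogeneous levels; everything else is block-matrix bookkeeping together with the noncommutative H\"older inequality.
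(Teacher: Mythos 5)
Your proof is correct and takes essentially the same route as the paper's. The central identity you derive via the $2\times 2$ block decomposition of $S_i$ with respect to $H_d^2 = \overline{I}\oplus\mathcal{F}_I$, namely $[S_i^I,S_j^{I*}] = P[S_i,S_j^*]P + B_j^*B_i$, is the same as the paper's $[T_i^*,T_j] = P[S_i^*,S_j]P - [P,S_i]^*[P,S_j]$ (after relabelling and noting $B_j^*B_i = [P,S_j]^*[P,S_i]$), and the rest of the argument --- the $O(n^{-1})$ operator-norm decay of $[S_i,S_j^*]$ on $\mathbb{H}_n$ combined with the rank bound $\dim(\mathbb{H}_n\ominus I_n) = O(n^{\dim I - 1})$ coming from the Hilbert polynomial, followed by H\"older in one direction and the square-root trick in the other --- is precisely what the paper does. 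The only stylistic difference is that you package the identities using block-matrix bookkeeping rather than manipulating the projections directly; one small slip is writing $\dim V^n = h_I(n)$, which is only literally correct when $I$ is radical --- for a general homogeneous ideal one should say $\dim(\mathbb{H}_n\ominus I_n) = h_I(n)$, but this does not affect the argument.
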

\begin{rem}
A slightly weaker form of this conjecture, holding only for $p>d$ instead of $p>\dim I$, is well known (see, e.g., \cite[Proposition 4.2]{Arv07}).
\end{rem}
\begin{proof}
In \cite{Arv98}, it is shown that 
\begin{equation}
\|[S_{i}^{*},S_{j}]\mid_{\mathbb{H}_{n}}\|\leq2/(n+1).\label{eq:p-ineq-1}
\end{equation}
It follows that $\mathrm{trace}(|[S_{i}^{*},S_{j}]|^{p})<\infty$
for all $p>d$, since
\[
\mathrm{trace}(|[S_{i}^{*},S_{j}]|^{p})\leq\sum_{n=0}^{\infty}\frac{2^{p}\dim\mathbb{H}_{n}}{(n+1)^{p}},
\]
and this is finite for $p>d$, since $\dim\mathbb{H}_{n}=O(n^{d-1}).$

Write $T_i = S^I_i = P S_i P$, $i=1, \ldots, d$. Since $\mathcal{F}_{I}^{\perp}$ is an invariant subspace for the
$d$-shift,
\[
[T_{i}^{*},T_{j}]-P[S_{i}^{*},S_{j}]P=-P S_i^{*} (I-P) S_{j}P=-[P,S_{i}]^{*}[P,S_{j}],
\]
which we can rewrite as 
\begin{equation}
[T_{i}^{*},T_{j}]= P[S_{i}^{*},S_{j}]P - [P,S_{i}]^{*}[P,S_{j}].\label{eq:p-ineq-2}
\end{equation}
By (\ref{eq:p-ineq-1}) we know that $\|[S_{i}^{*},S_{j}]\mid_{\mathbb{H}_{n}}\|=O(n^{-1})$,
so it follows that there is a constant $M>0$ such that 
\[
\mathrm{trace}(|P[S_{i}^{*},S_{j}]P|^{p})\leq M\sum_{n=0}^{\infty}\frac{\dim(\mathbb{H}_{n}\ominus I_{n})}{n^{p}},
\]
and this is finite for $p>\dim I$. Therefore, $P[S_{i}^{*},S_{j}]P$
belongs to $\mathcal{L}^{p}$ for every $p>\dim I$. Furthermore,
for every $p\geq1$, $[P,S_{i}]$ belongs to $\mathcal{L}^{2p}$ for all $i$
if and only if $[P,S_{i}]^{*}[P,S_{i}]$ belongs to $\mathcal{L}^{p}$ for all $i,j$. The desired result now follows from (\ref{eq:p-ineq-2}).
\end{proof}

\subsection{Decomposability and essential normality}\label{subsec:decomposability}
\begin{lem}
\label{lem:sum-closed-comm}Let $M_{1},\ldots,M_{k}$ be subspaces
of a Hilbert space $\mathcal{H}$. For $p\geq1$, suppose that the
projections $P_{M_{1}},\ldots,P_{M_{k}}$ each commute modulo $\mathcal{L}^{p}$
with an operator $T$ in $\mathrm{B}(\mathcal{H})$. If the algebraic
sum $M_{1}+\ldots+M_{k}$ is closed, then the projection $P_{M_{1}+\ldots+M_{k}}$
onto the subspace $M_{1}+\ldots+M_{k}$ also commutes modulo $\mathcal{L}^{p}$
with $T$.\end{lem}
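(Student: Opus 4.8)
The plan is to reduce the statement to a known description of the projection onto a sum of subspaces, and then control the relevant commutators. First I would recall that when $M_1 + \ldots + M_k$ is closed, the projection $P_{M_1+\ldots+M_k}$ can be written as a noncommutative polynomial (or a convergent expression) in the individual projections $P_{M_1}, \ldots, P_{M_k}$. The cleanest route is induction on $k$: the case $k=1$ is trivial, and for the inductive step it suffices to treat $k=2$, i.e.\ to show that if $M$ and $N$ are subspaces with $M+N$ closed, and if $P_M$ and $P_N$ each commute with $T$ modulo $\mathcal{L}^p$, then $P_{M+N}$ does too. Indeed, if $M_1 + \ldots + M_{k-1}$ is closed (which follows from $M_1 + \ldots + M_k$ being closed, by a standard fact about sums of closed subspaces) then by the inductive hypothesis $P_{M_1 + \ldots + M_{k-1}}$ commutes with $T$ modulo $\mathcal{L}^p$, and one more application of the $k=2$ case, with $M = M_1 + \ldots + M_{k-1}$ and $N = M_k$, finishes the argument — here I should note that $(M_1 + \ldots + M_{k-1}) + M_k$ is closed because it equals $M_1 + \ldots + M_k$.

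For the $k=2$ case, the key tool is that when $M+N$ is closed the ``angle'' between $M$ and $N$ is positive, equivalently $\|P_M P_N - P_{M\cap N}\| < 1$, so the operator $I - (P_M - P_{M\cap N})(P_N - P_{M\cap N})$ is invertible on the appropriate subspace. From this one obtains an explicit formula for $P_{M+N}$; for instance, a classical identity (due essentially to the theory of the ``method of alternating projections'' / von Neumann) gives $P_{M+N}$ as $P_{M\cap N}$ plus a norm-convergent series, or more simply $P_{M+N} = P_M + P_N - P_{M\cap N} - (P_M - P_{M\cap N})(I - (\text{something}))^{-1}(\ldots)$. In any case $P_{M+N}$ lies in the norm-closed algebra generated by $P_M$, $P_N$, and $P_{M\cap N}$, together with the inverse of an invertible element of that algebra. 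The point I would use is: the collection of bounded operators that commute with $T$ modulo $\mathcal{L}^p$ forms a Banach algebra (it is a subalgebra of $\mathrm{B}(\mathcal{H})$ closed under the norm topology, since $\mathcal{L}^p$ is a norm-closed ideal, and it is inverse-closed — if $X$ is invertible in $\mathrm{B}(\mathcal{H})$ and $[X,T] \in \mathcal{L}^p$ then $[X^{-1},T] = -X^{-1}[X,T]X^{-1} \in \mathcal{L}^p$). Since $P_{M\cap N}$ also commutes with $T$ modulo $\mathcal{L}^p$ (because $M\cap N$ is again a subspace whose projection is a norm limit of polynomials in $P_M, P_N$ via alternating projections — or one can build $M \cap N$ into the induction), every element of the algebra generated by $P_M, P_N, P_{M\cap N}$ and the relevant inverse commutes with $T$ modulo $\mathcal{L}^p$, and in particular $P_{M+N}$ does.

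The main obstacle I anticipate is making the formula for $P_{M+N}$ precise and verifying that the operator one needs to invert really is invertible and really lies in the algebra generated by the projections — i.e.\ correctly invoking the geometry of subspaces forming a positive angle. A secondary subtlety is the handling of $P_{M\cap N}$: one wants to know its projection is also in the closed algebra generated by $P_M$ and $P_N$, which follows from $P_{M\cap N} = \lim_n (P_M P_N)^n$ with convergence in norm precisely when the angle is positive (equivalently $M+N$ closed). An alternative that avoids some of this bookkeeping is to cite a ready-made result: if $M+N$ is closed then $P_{M+N} = P_M(P_M + P_N)^+ P_M + \ldots$ or simply observe that $P_M + P_N$ has closed range equal to $M+N$, so $P_{M+N}$ equals the projection onto the range of the positive operator $P_M + P_N$, which by spectral calculus (the spectrum of $P_M + P_N$ does not accumulate at $0$ from the positive side, again by positivity of the angle) is a norm limit of polynomials in $P_M + P_N$ — hence commutes with $T$ modulo $\mathcal{L}^p$. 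I would present the proof along these latter lines, as it most transparently puts $P_{M+N}$ inside the Banach algebra of operators commuting with $T$ modulo $\mathcal{L}^p$, and then reduce the general $k$ to $k = 2$ by the induction sketched above.
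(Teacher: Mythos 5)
Your core idea --- realize the target projection via spectral calculus applied to the positive operator $Q = P_{M_1} + \ldots + P_{M_k}$, using the closedness of the algebraic sum to isolate $0$ in $\sigma(Q)$ --- is the right one, and it is essentially the strategy of the paper's cited references (Arveson's Theorem 4.4 and Kennedy's Theorem 3.3; the paper itself only points to these). However, there are two genuine gaps in the way you justify the conclusion.

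First, and most seriously, the claim that $\mathcal{L}^p$ is a norm-closed ideal of $\mathrm{B}(\mathcal{H})$ is false: the operator-norm closure of $\mathcal{L}^p$ is the ideal of compact operators, not $\mathcal{L}^p$ itself. Consequently $\{X \in \mathrm{B}(\mathcal{H}) : [X,T] \in \mathcal{L}^p\}$ is \emph{not} a norm-closed subalgebra, and the assertion that a norm limit of polynomials in $Q$ still commutes with $T$ modulo $\mathcal{L}^p$ does not follow. (Indeed if $p_n(Q) \to P$ in operator norm one only controls $\|[p_n(Q)-P, T]\|$, not its Schatten $p$-norm.) The same objection applies to your use of alternating projections to get $P_{M\cap N}$. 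The standard repair is to replace the Stone--Weierstrass approximation by the holomorphic functional calculus: since $0$ is isolated in $\sigma(Q)$, one has the Riesz integral
\[
P_{M_1 + \ldots + M_k} \;=\; \frac{1}{2\pi i}\oint_\gamma (zI - Q)^{-1}\, dz ,
\]
and then the resolvent identity $[(zI-Q)^{-1},T] = (zI-Q)^{-1}[Q,T](zI-Q)^{-1}$ exhibits $[P_{M_1+\ldots+M_k},T]$ as a Bochner integral, over the compact contour $\gamma$, of a continuous $\mathcal{L}^p$-valued integrand (its $\mathcal{L}^p$-norm is bounded by $\sup_{z\in\gamma}\|(zI-Q)^{-1}\|^2 \cdot \|[Q,T]\|_{\mathcal{L}^p}$). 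Since $\mathcal{L}^p$ is a Banach space, the integral lies in $\mathcal{L}^p$, and this closes the argument. Your ``inverse-closed'' observation, $[X^{-1},T] = -X^{-1}[X,T]X^{-1}$, is exactly what makes the resolvent identity produce an $\mathcal{L}^p$ commutator, so you had the right ingredient in hand.

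Second, the induction on $k$ is unsound: closedness of $M_1 + \ldots + M_k$ does \emph{not} imply closedness of $M_1 + \ldots + M_{k-1}$. For instance, take $M_1, M_2$ with $M_1 + M_2$ dense but not closed and $M_3 = \mathcal{H}$; then $M_1+M_2+M_3 = \mathcal{H}$ is closed while $M_1 + M_2$ is not. Fortunately this gap is easy to avoid: run the argument directly on $Q = \sum_{i=1}^k P_{M_i}$ for general $k$. One checks that $\operatorname{ran} Q^{1/2} = M_1 + \ldots + M_k$ (write $Q = R^*R$ with $Rx = (P_{M_1}x,\ldots,P_{M_k}x)$ and use $\operatorname{ran} Q^{1/2} = \operatorname{ran} R^*$), so closedness of the algebraic sum is equivalent to $Q$ having closed range, i.e.\ to $0$ being isolated in $\sigma(Q)$, and then the Riesz-integral argument above applies verbatim with no reduction to $k=2$ needed.
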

\begin{proof}
The proof of this result follows the outline of the proof of \cite[Theorem 3.3]{Ken12} or \cite[Theorem 4.4]{Arv07}.
\end{proof}

\begin{prop}
\label{prop:ideals}Let $I_{1},\ldots,I_{k}$ be homogeneous ideals of $\mathbb{C}[z_{1},\ldots,z_{d}]$.
\begin{enumerate}
\item If $\mathcal{F}_{I_{1}},\ldots,\mathcal{F}_{I_{k}}$ are $p$-essentially
normal for $p>\max\{\dim I_{1},\ldots,\dim I_{k}\}$, and the algebraic
sum $\mathcal{F}_{I_{1}}^{\perp}+\ldots+\mathcal{F}_{I_{k}}^{\perp}$
is closed, then $\mathcal{F}_{I_{1}+\ldots+I_{k}}$ is also $p$-essentially
normal.
\item If $\mathcal{F}_{I_{1}},\ldots,\mathcal{F}_{I_{k}}$ are $p$-essentially
normal for $p>\dim I_{1}\cap\ldots\cap I_{k}$, and the algebraic
sum $\mathcal{F}_{I_{1}}+\ldots+\mathcal{F}_{I_{k}}$ is closed, then
$\mathcal{F}_{I_{1}\cap\ldots\cap I_{k}}$ is also $p$-essentially
normal.
\end{enumerate}
\end{prop}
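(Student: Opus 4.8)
The plan is to deduce both statements from Lemma \ref{lem:refinement} and Lemma \ref{lem:sum-closed-comm}, by translating the statement about essential normality into a statement about the projections onto the relevant spaces commuting modulo a Schatten class with the shift operators $S_1, \ldots, S_d$. Throughout, write $P_j$ for the projection of $H_d^2$ onto $\mathcal{F}_{I_j}$, so that $I-P_j$ is the projection onto $\overline{I_j}$.

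For part (1), first observe that $\mathcal{F}_{I_1 + \ldots + I_k} = H_d^2 \ominus \overline{I_1 + \ldots + I_k}$, and since $\overline{I_1 + \ldots + I_k}$ is the closed linear span of $\overline{I_1} + \ldots + \overline{I_k}$, which by hypothesis is already closed, we get that the projection onto $\mathcal{F}_{I_1 + \ldots + I_k}^\perp$ is exactly $P_{\mathcal{F}_{I_1}^\perp + \ldots + \mathcal{F}_{I_k}^\perp}$. Now set $p' = 2p$. By Lemma \ref{lem:refinement}, each $\mathcal{F}_{I_j}$ being $p$-essentially normal for $p > \dim I_j$ is equivalent to $[S_i, P_j] \in \mathcal{L}^{2p}$ for all $i$; equivalently $[S_i, I - P_j] \in \mathcal{L}^{2p}$, i.e. the projection onto $\mathcal{F}_{I_j}^\perp$ commutes with each $S_i$ modulo $\mathcal{L}^{2p}$. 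Apply Lemma \ref{lem:sum-closed-comm} with $M_j = \mathcal{F}_{I_j}^\perp$, $T = S_i$, and exponent $2p$: since the algebraic sum is closed, the projection onto $\mathcal{F}_{I_1}^\perp + \ldots + \mathcal{F}_{I_k}^\perp$ commutes with each $S_i$ modulo $\mathcal{L}^{2p}$. Hence $[S_i, P_{\mathcal{F}_{I_1+\ldots+I_k}}] \in \mathcal{L}^{2p}$ for all $i$. Finally, invoke the converse direction of Lemma \ref{lem:refinement}: since $p > \dim I_1 + \ldots + I_k$ (note $\dim(I_1 + \ldots + I_k) \leq \max_j \dim I_j$ because $V(I_1 + \ldots + I_k) = \bigcap_j V(I_j) \subseteq V(I_j)$, or more directly because passing to a larger ideal cannot increase the dimension of the vanishing locus), the membership $[S_i, P] \in \mathcal{L}^{2p}$ gives that $\mathcal{F}_{I_1 + \ldots + I_k}$ is $p$-essentially normal.

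For part (2) the argument is the mirror image. Here $\mathcal{F}_{I_1 \cap \ldots \cap I_k} = H_d^2 \ominus \overline{I_1 \cap \ldots \cap I_k}$, and one needs the identity $\overline{I_1 \cap \ldots \cap I_k}^{\perp} = \mathcal{F}_{I_1} + \ldots + \mathcal{F}_{I_k}$ (as sets inside $H_d^2$); this should follow from the fact that the orthocomplement of an intersection of closed subspaces is the closed linear span of the orthocomplements, once one checks that $\overline{I_1 \cap \ldots \cap I_k} = \overline{I_1} \cap \ldots \cap \overline{I_k}$ for homogeneous ideals (which holds because these closures are graded, so equality can be checked degree by degree, where it is trivial). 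Granting this and the hypothesis that $\mathcal{F}_{I_1} + \ldots + \mathcal{F}_{I_k}$ is closed, the projection onto $\mathcal{F}_{I_1 \cap \ldots \cap I_k}$ equals $P_{\mathcal{F}_{I_1} + \ldots + \mathcal{F}_{I_k}}$. Again set the exponent to $2p$: Lemma \ref{lem:refinement} turns the $p$-essential normality of each $\mathcal{F}_{I_j}$ (valid here since $p > \dim(I_1 \cap \ldots \cap I_k) \geq \dim I_j$, as $V(I_1 \cap \ldots \cap I_k) = \bigcup_j V(I_j) \supseteq V(I_j)$) into $[S_i, P_j] \in \mathcal{L}^{2p}$, Lemma \ref{lem:sum-closed-comm} with $M_j = \mathcal{F}_{I_j}$ propagates this to $[S_i, P_{\mathcal{F}_{I_1} + \ldots + \mathcal{F}_{I_k}}] \in \mathcal{L}^{2p}$, and the converse half of Lemma \ref{lem:refinement} concludes that $\mathcal{F}_{I_1 \cap \ldots \cap I_k}$ is $p$-essentially normal.

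The main obstacle, and the only genuinely nontrivial point, is the bookkeeping with dimensions and with the identifications of the projections: one must verify carefully that $\dim(I_1 + \ldots + I_k) \leq \max_j \dim I_j$ and $\dim(I_1 \cap \ldots \cap I_k) \geq \max_j \dim I_j$ so that Lemma \ref{lem:refinement} applies in both directions at the stated ranges of $p$, and that the closedness hypotheses indeed force $\mathcal{F}_{I_1 + \ldots + I_k}^\perp = \mathcal{F}_{I_1}^\perp + \ldots + \mathcal{F}_{I_k}^\perp$ and $\mathcal{F}_{I_1 \cap \ldots \cap I_k} = \mathcal{F}_{I_1} + \ldots + \mathcal{F}_{I_k}$ on the nose (not merely up to closure). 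Everything else is a direct application of the two lemmas with Schatten exponent $2p$ in place of $p$.
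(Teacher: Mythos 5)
Your proof is correct and follows essentially the same route as the paper: translate $p$-essential normality into Schatten-$2p$ membership of $[S_i, P]$ via Lemma~\ref{lem:refinement}, propagate this through the closed algebraic sum using Lemma~\ref{lem:sum-closed-comm}, and translate back after checking the dimension inequalities $\dim(I_1+\ldots+I_k)\leq\max_j\dim I_j$ and $\dim(I_1\cap\ldots\cap I_k)\geq\max_j\dim I_j$. The extra care you take in verifying $\overline{I_1\cap\ldots\cap I_k}=\overline{I_1}\cap\ldots\cap\overline{I_k}$ by a graded argument is a detail the paper leaves implicit, but the underlying argument is identical.
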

\begin{proof}
First, note that the submodule $\mathcal{F}_{I_{1}+\ldots+I_{k}}$
is the orthogonal complement of the algebraic sum $\mathcal{F}_{I_{1}}^{\perp}+\ldots+\mathcal{F}_{I_{k}}^{\perp}$,
and the submodule $\mathcal{F}_{I_{1}\cap\ldots\cap I_{k}}$ is the
closure of the algebraic sum $\mathcal{F}_{I_{1}}+\ldots+\mathcal{F}_{I_{k}}$.

If $\mathcal{F}_{I_{1}},\ldots,\mathcal{F}_{I_{k}}$ are $p$-essentially
normal for $p>\max\{\dim I_{1},\ldots,\dim I_{k}\}$, then by Lemma
\ref{lem:refinement}, each of the commutators $[S_{i},P_{\mathcal{F}_{I_{j}}}^{\perp}]$
belongs to $\mathcal{L}^{2p}$ for $1\leq i\leq d$ and $1\leq j\leq k$.
If the algebraic sum $\mathcal{F}_{I_{1}}^{\perp}+\ldots+\mathcal{F}_{I_{k}}^{\perp}$
is closed, then since 
\[
\mathcal{F}_{I_{1}}^{\perp}+\ldots+\mathcal{F}_{I_{k}}^{\perp}=\mathcal{F}_{I_{1}+\ldots+I_{k}}^{\perp},
\]
Lemma \ref{lem:sum-closed-comm} implies that the commutators $[S_{i},P_{\mathcal{F}_{I_{1}+\ldots+I_{k}}^{\perp}}]$
also belong to $\mathcal{L}^{2p}$, and hence that the commutators
$[S_{i},P_{\mathcal{F}_{I_{1}+\ldots+I_{k}}}]$ belong to
$\mathcal{L}^{2p}$ for $1\leq i\leq d$. Therefore, since $\dim(I_{1}+\ldots+I_{k}) \leq \max\{\dim I_{1},\ldots,\dim I_{k}\}$,
it follows from Lemma \ref{lem:refinement} that $\mathcal{F}_{I_{1}+\ldots+I_{k}}$
is also $p$-essentially normal.

If $\mathcal{F}_{I_{1}},\ldots,\mathcal{F}_{I_{k}}$ are $p$-essentially
normal for $p>\dim I_{1}\cap\ldots\cap I_{k}$, and the algebraic
sum $\mathcal{F}_{I_{1}}+\ldots+\mathcal{F}_{I_{k}}$ is closed, then
the proof that $\mathcal{F}_{I_{1}\cap\ldots\cap I_{k}}$ is also
$p$-essentially normal follows in the same way after noting that
\[
\mathcal{F}_{I_{1}}+\ldots+\mathcal{F}_{I_{k}}=\mathcal{F}_{I_{1}\cap\ldots\cap I_{k}},
\]
and that $\dim(I_{1}\cap\ldots\cap I_{k})\geq\max\{\dim I_{1},\ldots,\dim I_{k}\}$.
\end{proof}

\begin{prop}
\label{prop:varieties}Let $V_{1},\ldots,V_{k}$ be homogeneous varieties
in $\mathbb{B}_{d}$.
\begin{enumerate}
\item For $p>\max\{\dim V_{1},\ldots,\dim V_{k}\}$, if $\mathcal{F}_{V_{1}},\ldots,\mathcal{F}_{V_{k}}$
are $p$-essentially normal and the algebraic sum $\mathcal{F}_{V_{1}}^{\perp}+\ldots+\mathcal{F}_{V_{k}}^{\perp}$
is closed, then $\mathcal{F}_{V_{1} \cap \ldots \cap V_{k}}$is also $p$-essentially
normal.
\item For $p>\dim V_{1}\cup\ldots\cup V_{k}$, if $\mathcal{F}_{V_{1}},\ldots,\mathcal{F}_{V_{k}}$
are $p$-essentially normal and the algebraic sum $\mathcal{F}_{V_{1}}+\ldots+\mathcal{F}_{V_{k}}$
is closed, then $\mathcal{F}_{V_{1}\cup\ldots\cup V_{k}}$ is $p$-essentially
normal.
\end{enumerate}
\end{prop}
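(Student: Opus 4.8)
The plan is to reduce both parts to Proposition \ref{prop:ideals} by passing to the radical ideals $I_j := I(V_j)$ and using the standard dictionary between set-theoretic operations on varieties and algebraic operations on their ideals, together with the identities $\mathcal{F}_{V_j} = \mathcal{F}_{I_j}$ and $\mathcal{F}_{V_j}^{\perp} = \overline{I_j}$. For part (2) this is essentially immediate: since an intersection of radical ideals is radical, $I(V_1 \cup \ldots \cup V_k) = I_1 \cap \ldots \cap I_k$, so that $\mathcal{F}_{V_1 \cup \ldots \cup V_k} = \mathcal{F}_{I_1 \cap \ldots \cap I_k}$ and $\dim(V_1 \cup \ldots \cup V_k) = \dim(I_1 \cap \ldots \cap I_k)$; as the algebraic sum $\mathcal{F}_{I_1} + \ldots + \mathcal{F}_{I_k} = \mathcal{F}_{V_1} + \ldots + \mathcal{F}_{V_k}$ is closed by hypothesis, Proposition \ref{prop:ideals}(2) applies verbatim and yields the $p$-essential normality of $\mathcal{F}_{V_1 \cup \ldots \cup V_k}$. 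This part requires no new ideas.

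For part (1), I would observe that $V_1 \cap \ldots \cap V_k = V(I_1 + \ldots + I_k)$, and that the closedness of $\mathcal{F}_{V_1}^{\perp} + \ldots + \mathcal{F}_{V_k}^{\perp} = \overline{I_1} + \ldots + \overline{I_k}$ is exactly the hypothesis of Proposition \ref{prop:ideals}(1) for the ideals $I_1, \ldots, I_k$. Since moreover $\dim(I_1 + \ldots + I_k) = \dim(V_1 \cap \ldots \cap V_k) \leq \max_j \dim V_j < p$, that proposition shows that $\mathcal{F}_{I_1 + \ldots + I_k}$ is $p$-essentially normal.

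The step I expect to be the main obstacle is the passage from the ideal $J := I_1 + \ldots + I_k$ to its radical. Indeed, the module the proposition asks us to control is $\mathcal{F}_{V_1 \cap \ldots \cap V_k} = \mathcal{F}_{\sqrt{J}}$, not $\mathcal{F}_J$, and $\mathcal{F}_{\sqrt{J}}^{\perp} = \overline{\sqrt{J}} \supseteq \overline{J} = \mathcal{F}_J^{\perp}$, with equality only when $J$ is radical -- something the closedness hypothesis does not obviously force. So one must show that $\mathcal{F}_{\sqrt{J}}$ is $p$-essentially normal whenever $\mathcal{F}_J$ is (for $p > \dim J = \dim \sqrt{J}$); equivalently, using Lemma \ref{lem:refinement} and the $p$-essential normality of $\mathcal{F}_J$ established above, one must show that the projection onto $\overline{\sqrt{J}} \ominus \overline{J}$ commutes modulo $\mathcal{L}^{2p}$ with each $S_i$. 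The natural input here is that the graded module $\sqrt{J}/J$ is supported on $V(J)$ and hence has dimension at most $\dim J < p$, so that $\overline{\sqrt{J}} \ominus \overline{J}$ ought to be negligible in the Schatten sense; converting this into the required commutator estimate -- or else appealing to a reduction of the Arveson-Douglas conjecture to radical ideals -- is the delicate point, and the only place where real work is needed beyond Section \ref{sec:decomposability}'s lemmas. Finally, I would remark that in the case used for the applications in Section \ref{sec:applications}, namely when each $V_j$ is a linear subspace, the ideals $I_j$, and hence $J$, are generated by linear forms, so $J$ is automatically radical and this last step is vacuous; the same holds more generally whenever $I_1 + \ldots + I_k$ happens to be radical.
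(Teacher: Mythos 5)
Your proof of part (2) is correct and is exactly the intended argument: each $I_j := I(V_j)$ is radical, an intersection of radical ideals is radical, so $I(V_1 \cup \ldots \cup V_k) = I_1 \cap \ldots \cap I_k$ and $\mathcal{F}_{V_1\cup\ldots\cup V_k} = \mathcal{F}_{I_1\cap\ldots\cap I_k}$; Proposition~\ref{prop:ideals}(2) then applies directly with the correct dimension bound. The paper's proof (a one-line reduction to Proposition~\ref{prop:ideals} ``using the correspondence between ideals and varieties'') is precisely this.

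Your discussion of part (1), however, raises a genuine point that the paper's one-line proof does not address, and you are right to flag it. The conclusion of Proposition~\ref{prop:ideals}(1) concerns $\mathcal{F}_{J}$ with $J = I_1 + \ldots + I_k$, whereas the conclusion of Proposition~\ref{prop:varieties}(1) concerns $\mathcal{F}_{V_1\cap\ldots\cap V_k} = \mathcal{F}_{I(V_1\cap\ldots\cap V_k)} = \mathcal{F}_{\sqrt{J}}$, and these are different Hilbert modules unless $\overline{J} = \overline{\sqrt{J}}$ --- which fails precisely when $J$ is not radical (a sum of radical ideals need not be radical, e.g.\ $(z_1 z_2) + (z_1 - z_2) = (z_1 z_2,\, z_1 - z_2)$ in two variables). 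The closedness hypothesis does not obviously force radicality. Passing from the $p$-essential normality of $\mathcal{F}_J$ to that of $\mathcal{F}_{\sqrt{J}}$ amounts, via Lemma~\ref{lem:refinement}, to controlling $[S_i, Q]$ where $Q$ is the projection onto $\overline{\sqrt{J}} \ominus \overline{J}$. The fact that this space has graded dimensions $O(n^{\dim J - 1})$ is not by itself enough: the singular values of $[S_i, Q]$ need not decay, so one cannot conclude membership in $\mathcal{L}^{2p}$ from the rank estimate alone, exactly as you suspected. So this is not merely a ``delicate point'' but a real gap in the quick reduction, and it is not filled by the paper's own proof either; it would require an additional argument (or a reduction of the radical case to the non-radical case, or vice versa) that is nowhere supplied. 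Note also that your closing remark about the applications is slightly off target: the theorems in Section~\ref{sec:applications} rely on part (2), not part (1), so the potential gap in part (1) does not impinge on the paper's main results. In short: part (2) is complete and agrees with the paper; for part (1) you have correctly diagnosed a step the paper leaves unjustified, but you have not closed it, so your argument for part (1) is incomplete.
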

\begin{proof}
The proof of this result follows immediately from Proposition \ref{prop:ideals}
using the correspondence between ideals of $\mathbb{C}[z_{1},\ldots,z_{d}]$
and varieties in $\mathbb{C}^{d}$.
\end{proof}

\subsection{Some decomposable varieties} \label{subsec:examples}

The following theorem was proved by Michael Hartz in \cite{Har12}. We shall say that $V$ is a {\em linear subspace} in $\mathbb{B}_d$ if $V = L \cap \mathbb{B}_d$ where $L \subseteq \mathbb{C}^d$ is a subspace. 
\begin{thm}[Hartz]
\label{thm:sum-subspaces}
Let $V_{1},\ldots,V_{k}$ be linear subspaces in
$\mathbb{B}_{d}$. Then the algebraic sum $\mathcal{F}_{V_{1}}+\ldots+\mathcal{F}_{V_{k}}$
is closed.
\end{thm}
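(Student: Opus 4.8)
The plan is to reduce the problem to a statement about the symmetric Fock space decomposition $\mathcal{F}_{V_i} = \oplus_n V_i^n$ and then to a purely geometric estimate on angles between the finite-dimensional subspaces $V_i^n$ of $(\mathbb{C}^d)^{\otimes n}$. Since each $V_i = L_i \cap \mathbb{B}_d$ is a linear subspace, we may write $L_i = \mathrm{span}(V_i)$, and the subspace $V_i^n \subseteq (\mathbb{C}^d)^{\otimes n}$ is precisely the $n$-th symmetric tensor power $L_i^{\otimes n}$ (symmetrized), because the vectors $\lambda^n$ for $\lambda \in L_i$ span $L_i^{\otimes n}_{\mathrm{sym}}$. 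The key point, exactly as in \cite[Section 7]{DRS11}, is that an algebraic sum $M_1 + \ldots + M_k$ of closed subspaces of a Hilbert space is closed provided the sum is ``at an angle'': it suffices to produce a constant $C > 0$ so that for all choices of vectors $x_i \in M_i$ one has $\|x_1 + \ldots + x_k\|^2 \geq C \sum_i \|x_i\|^2$ up to a finite-dimensional exceptional subspace — more precisely, it suffices to have this on all but finitely many of the graded pieces, since a finite-codimensional perturbation of a closed subspace is closed.

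First I would set up the grading: $\mathcal{F}_{V_1} + \ldots + \mathcal{F}_{V_k} = \oplus_n (V_1^n + \ldots + V_k^n)$ provided we show the right-hand side is an orthogonal direct sum of closed subspaces, which holds once each $V_1^n + \ldots + V_k^n$ is closed in $\mathbb{H}_n$ (automatic, being finite-dimensional) and the sum over $n$ is ``uniformly transversal'' in $n$. So the real work is to bound, uniformly in $n$ (or for all large $n$), the quantity controlling how far $V_1^n + \ldots + V_k^n$ is from being an orthogonal sum. The natural tool is the Friedrichs angle: for $i \neq j$ one estimates $\cos(L_i^{\otimes n}, L_j^{\otimes n})$ and shows it decays geometrically. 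Indeed, if $\theta_{ij} \in (0, \pi/2]$ is the minimal nonzero principal angle between $L_i$ and $L_j$, then an elementary computation with orthonormal bases adapted to the principal-angle decomposition of the pair $(L_i, L_j)$ shows $\cos(L_i^{\otimes n}, L_j^{\otimes n}) \leq (\cos\theta_{ij})^n \to 0$. Then the standard ``multi-subspace at an angle'' lemma (compare the derivation of \eqref{eq:est-V}) converts these pairwise geometric bounds into the two-sided estimate
\[
(1 - k c^n)\sum_{i=1}^k \|x_i\|^2 \leq \Big\|\sum_{i=1}^k x_i\Big\|^2 \leq (1 + k c^n)\sum_{i=1}^k \|x_i\|^2, \qquad x_i \in V_i^n,
\]
valid for $n$ large, where $c = \max_{i \neq j} \cos(L_i, L_j) < 1$. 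For such $n$ the map $(x_1,\ldots,x_k) \mapsto \sum x_i$ is bounded below, so $V_1^n + \ldots + V_k^n$ is an ``almost orthogonal'' internal direct sum; summing over $n \geq N$ and handling the finitely many remaining $n < N$ as a finite-dimensional summand gives that $\mathcal{F}_{V_1} + \ldots + \mathcal{F}_{V_k}$ is closed.

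The main obstacle, I expect, is the care needed in two places: (i) justifying that the grading of the algebraic sum is legitimate — i.e. that a Cauchy sequence in $\mathcal{F}_{V_1} + \ldots + \mathcal{F}_{V_k}$ has components that are Cauchy in each graded piece and that the lower bound above on the large-$n$ pieces lets one pass to the limit inside the closed subspace rather than merely in $H_d^2$; and (ii) the tensor-power angle estimate $\cos(L_i^{\otimes n}, L_j^{\otimes n}) \leq (\cos\theta_{ij})^n$, which requires diagonalizing the pair $(L_i, L_j)$ via the CS (cosine–sine) decomposition and checking that symmetrization does not increase the cosine. Both are by now standard — (i) is the argument of \cite[Lemma 7.10, Theorem 7.16]{DRS11} and (ii) is a linear-algebra computation — so the proof is essentially an assembly of these ingredients, with the only genuinely new observation being that for \emph{linear} subspaces no tractability or irreducibility hypothesis is needed because $V_i^n$ is literally a symmetric tensor power, making the principal-angle computation exact.
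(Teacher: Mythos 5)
The paper does not prove this result; it is imported verbatim from Hartz's paper \cite{Har12}, so there is no in-paper argument to compare against. That said, your proposal has a genuine gap, precisely at what you flag as obstacle (ii).

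The estimate $\cos(L_i^{n}, L_j^{n}) \leq (\cos\theta_{ij})^n$, where $\theta_{ij}$ is the smallest \emph{nonzero} principal angle, is false as soon as $L_i \cap L_j \neq \{0\}$. Take $L_1 = \mathrm{span}\{e_1,e_2\}$ and $L_2 = \mathrm{span}\{e_1, v\}$ with $v = e_2\cos\theta + e_3\sin\theta$, so that $L_1 \cap L_2 = \mathbb{C}e_1$ and $\cos(L_1,L_2) = \cos\theta$. Then $L_1^2 \cap L_2^2 = \mathbb{C}\, e_1\odot e_1$, and the unit vectors $\sqrt{2}\,e_1\odot e_2 \in L_1^2 \ominus (L_1^2\cap L_2^2)$ and $\sqrt{2}\,e_1\odot v \in L_2^2 \ominus (L_1^2\cap L_2^2)$ have inner product exactly $\cos\theta$. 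So $\cos(L_1^2,L_2^2) \geq \cos\theta$, not $\leq \cos^2\theta$; more generally the ``slab'' $e_1^{n-1}\odot e_2$ versus $e_1^{n-1}\odot v$ keeps the Friedrichs cosine pinned at $\cos\theta$ for all $n$. The decay $\cos(V_i^n,V_j^n) \leq c^n$ used in \cite[Lemma 7.11]{DRS11} and in the proof of Theorem~\ref{thm:disjoint-varieties} is available exactly because the spans are assumed disjoint; when they overlap the intersection part prevents the angle from shrinking. This is fatal to your argument for $k \geq 3$: the multi-subspace estimate $(1 - kc_n)\sum\|x_i\|^2 \leq \|\sum x_i\|^2$ becomes vacuous once $kc_n \geq 1$, and without $c_n \to 0$ you cannot guarantee this fails only finitely often. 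Even for $k = 2$ you would additionally need to show the constant angle really stays uniformly below $1$ across all gradings, and the passage from pairwise Friedrichs angles to closedness of a sum of $k \geq 3$ subspaces with nested, overlapping intersections is not covered by the ``standard lemma'' you invoke; Hartz's actual proof must do something more structured (e.g.\ exploiting that the $I(L_i)$ are linear ideals and organizing the argument around the full intersection lattice), not a bare tensor-power angle bound.
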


We can also handle the following additional case.

\begin{thm}
\label{thm:disjoint-varieties} 
Let $V_{1},\ldots,V_{n}$ be homogeneous varieties in $\mathbb{B}_{d}$.
Suppose that $\mathrm{span}(V_{i})\cap\mathrm{span}(V_{j})=\{0\}$
whenever $i\ne j$. Then the algebraic sum $\mathcal{F}_{V_{1}}+\ldots + \mathcal{F}_{V_{n}}$
is closed.
\end{thm}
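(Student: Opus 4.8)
The plan is to reduce to the case where each $V_i$ is a linear subspace and then apply Hartz's Theorem~\ref{thm:sum-subspaces}, using the disjointness of the linear spans to control the interaction between the summands. The key observation is that $\mathcal{F}_{V_i}$ depends only on the variety $V_i$, and that for a homogeneous variety $V_i$ the space $\mathcal{F}_{V_i} = \bigoplus_n V_i^n$ is contained in $\bigoplus_n \mathrm{span}(V_i)^n = \mathcal{F}_{L_i}$, where $L_i = \mathrm{span}(V_i) \cap \mathbb{B}_d$. So first I would record the basic reduction: since $\mathrm{span}(V_i) \cap \mathrm{span}(V_j) = \{0\}$ for $i \ne j$, the subspaces $L_1, \ldots, L_n$ have pairwise trivial intersection, and the algebraic sum $\mathrm{span}(V_1) + \cdots + \mathrm{span}(V_n)$ is a direct (vector-space) sum inside $\mathbb{C}^d$. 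Replacing $\mathbb{C}^d$ by this sum, we may assume $\mathbb{C}^d = \mathrm{span}(V_1) \oplus \cdots \oplus \mathrm{span}(V_n)$.

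Next I would invoke Theorem~\ref{thm:sum-subspaces}: the algebraic sum $\mathcal{F}_{L_1} + \cdots + \mathcal{F}_{L_n}$ is closed. In fact, since the $\mathrm{span}(V_i)$ are in direct sum, I expect this sum to be not merely closed but to decompose quite rigidly — degree by degree, $L_1^m + \cdots + L_n^m$ sits inside $(\mathbb{C}^d)^m$ with the summands in ``general position'' relative to one another, and the angles between them are bounded away from zero uniformly in $m$ (this is exactly the kind of estimate appearing in \eqref{eq:est-V}, and it is implicit in the proof of Theorem~\ref{thm:sum-subspaces}). The point I want to extract is that there is a constant $c < 1$ and a bounded projection-type estimate: any element of $\mathcal{F}_{L_1} + \cdots + \mathcal{F}_{L_n}$ has a unique decomposition $f = f_1 + \cdots + f_n$ with $f_i \in \mathcal{F}_{L_i}$, and $\sum \|f_i\|^2 \le C \|f\|^2$ for a constant $C$ independent of $f$.

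The main step is then to deduce closedness of $\mathcal{F}_{V_1} + \cdots + \mathcal{F}_{V_n}$ from closedness of $\mathcal{F}_{L_1} + \cdots + \mathcal{F}_{L_n}$. Given a Cauchy sequence $g^{(m)} = \sum_i g_i^{(m)}$ with $g_i^{(m)} \in \mathcal{F}_{V_i} \subseteq \mathcal{F}_{L_i}$ converging to some $g$, the stability estimate from the previous paragraph forces each $g_i^{(m)}$ to be Cauchy (its norm is controlled by $\|g^{(m)}\|$ plus the decomposition constant), hence $g_i^{(m)} \to g_i$ for some $g_i$; since $\mathcal{F}_{V_i}$ is closed, $g_i \in \mathcal{F}_{V_i}$, and $g = \sum g_i \in \mathcal{F}_{V_1} + \cdots + \mathcal{F}_{V_n}$. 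So the decomposition inherited from the $L_i$ restricts correctly to the $V_i$. The one technical point requiring care is the uniqueness of the decomposition $g = \sum g_i$ with $g_i \in \mathcal{F}_{V_i}$: this follows because if $\sum h_i = 0$ with $h_i \in \mathcal{F}_{V_i} \subseteq \mathcal{F}_{L_i}$, then uniqueness of the decomposition in $\mathcal{F}_{L_1} + \cdots + \mathcal{F}_{L_n}$ (which holds since the $L_i$ are in direct sum, so $\mathcal{F}_{L_i} \cap \sum_{j\ne i}\mathcal{F}_{L_j} = \{0\}$ — this is where $\mathrm{span}(V_i)\cap \mathrm{span}(V_j) = \{0\}$ is used again) forces each $h_i = 0$.

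The main obstacle I anticipate is making precise the ``uniform angle'' / stability estimate for $\mathcal{F}_{L_1} + \cdots + \mathcal{F}_{L_n}$: Theorem~\ref{thm:sum-subspaces} as quoted only asserts closedness, and closedness of a finite algebraic sum of closed subspaces does automatically give the bounded decomposition (by the open mapping theorem applied to the addition map from the Hilbert-space direct sum $\mathcal{F}_{L_1} \oplus \cdots \oplus \mathcal{F}_{L_n}$ onto the closed sum — but this requires the sum to be a \emph{direct} sum of subspaces, i.e. $\mathcal{F}_{L_i} \cap \sum_{j \ne i}\mathcal{F}_{L_j} = \{0\}$). So the real content is verifying that when the $\mathrm{span}(V_i)$ are linearly independent, the $\mathcal{F}_{L_i}$ are linearly independent as subspaces of $H_d^2$; this is a graded statement reducing to showing $L_i^m \cap \sum_{j\ne i} L_j^m = \{0\}$ in $(\mathbb{C}^d)^m$, which holds because a nonzero symmetric tensor power $\lambda^m$ of a vector $\lambda \in \mathrm{span}(V_i)$ determines $\lambda$ up to scalar. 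Once that is in hand, the open mapping theorem supplies the bounded decomposition and the argument above goes through.
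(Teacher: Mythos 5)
Your proposal breaks at the very first reduction: pairwise trivial intersection of subspaces does \emph{not} imply that their sum is direct. The theorem's hypothesis is only that $\mathrm{span}(V_i)\cap\mathrm{span}(V_j)=\{0\}$ for $i\ne j$, and this is strictly weaker than $\mathrm{span}(V_i)\cap\sum_{j\ne i}\mathrm{span}(V_j)=\{0\}$. Concretely, take $n\ge 3$ pairwise distinct lines through the origin in $\mathbb{C}^2$; they are pairwise disjoint but cannot be in direct sum. This failure then propagates to the symmetric powers, where your argument genuinely needs it: with four pairwise-distinct lines $L_1,\ldots,L_4$ in $\mathbb{C}^2$, the four lines $L_i^2$ sit inside $(\mathbb{C}^2)^{\odot 2}\cong\mathbb{C}^3$, so by dimension count alone $L_4^2\cap(L_1^2+L_2^2+L_3^2)\ne\{0\}$. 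Hence the sum $\mathcal{F}_{L_1}+\cdots+\mathcal{F}_{L_n}$ is in general not a direct sum, the addition map has nontrivial kernel, and the open mapping theorem gives you a bounded decomposition only up to the kernel --- i.e., \emph{some} decomposition of each $f$ with controlled norms, but not a bound on the \emph{given} $g_i^{(m)}$. Without that, you cannot conclude that the individual $g_i^{(m)}$ are Cauchy, and the limiting argument collapses. (Your justification of $L_i^m\cap\sum_{j\ne i}L_j^m=\{0\}$ --- ``$\lambda^m$ determines $\lambda$ up to scalar'' --- would also not suffice even in the genuinely direct-sum case, since injectivity of the Veronese map does not control linear spans of its images; but that is a secondary issue.)

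The paper's proof sidesteps all of this. It works with the addition map $T:\mathcal{F}_{V_1}\oplus\cdots\oplus\mathcal{F}_{V_n}\to\mathcal{F}_V$ directly and does not claim $T$ is injective. Using the Friedrichs-angle estimate $\cos(V_i^k,V_j^k)\le c^k$ (from \cite[Lemma 7.11]{DRS11}), which is a \emph{pairwise} estimate and therefore needs only pairwise disjointness of the spans, it shows $\|T(x_1,\ldots,x_n)\|^2\ge(1-c^k(n-1))\sum\|x_i\|^2$ on $V_1^k\oplus\cdots\oplus V_n^k$ for $k$ large. Combined with finite-dimensionality of the low-degree pieces and the fact that $T$ respects the grading, this shows $\operatorname{range}(T)$ is closed --- despite $T$ having a (finite-dimensional) kernel supported in low degrees. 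That is exactly the mechanism your proposal is missing: bounded-below-on-tails rather than global injectivity. If you want to keep the spirit of your reduction, you would have to replace ``uniqueness of the decomposition'' with a degree-by-degree bounded-below estimate on the cokernel of the kernel, which is in effect what the paper does.
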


\begin{proof}
We can suppose that each of the varieties $V_1,\ldots,V_n$ are nonempty. For $1\leq i\leq d$, let $L_{i}=\mathrm{span}(V_{i})$.
Then, as in the proof of Proposition \ref{prop:quantitative}, since $L_{1},\ldots,L_{n}$ are finite dimensional and disjoint, if we let 
\[
c=\max\{\cos(L_{i},L_{j})\mid i\ne j\},
\]
then $0\leq c<1$. Following the proof of \cite[Lemma 7.11]{DRS11}, this implies that
\[
\cos(V_{i}^{k},V_{j}^{k})\leq c^{k},
\]
which we can rewrite as
\begin{equation}
\sup\{|\langle x_{i},x_{j}\rangle|/(\|x_i\|\|x_j\|) \mid 0 \ne x_{i}\in V_{i}^{k},\ 0 \ne  x_{j}\in V_{j}^{k},\ i\ne j\}\leq c^{k}\label{eq:small-angle} .
\end{equation}
Let $V=V_{1}\cup\ldots\cup V_{n}$, and define an operator $T:\mathcal{F}_{V_{1}}\oplus\ldots\oplus\mathcal{F}_{V_{n}}\to\mathcal{F}_{V}$
by
\[
T(x_{1},\ldots,x_{n})=x_{1}+\ldots+x_{n},\quad(x_{1},\ldots,x_{n})\in\mathcal{F}_{V_{1}}\oplus\ldots\oplus\mathcal{F}_{V_{n}}.
\]
Then the range of $T$ is precisely $\mathcal{F}_{V_{1}}+\ldots+\mathcal{F}_{V_{n}}$,
and hence we will be done if we can prove that $T$ has closed range.

Note that $T$ is graded, in the sense that it maps $V_{1}^{k}\oplus\ldots\oplus V_{n}^{k}$
to $V^{k}$. For $(x_{1},\ldots,x_{n})$ in $V_{1}^{k}\oplus\ldots\oplus V_{n}^{k}$, the inequality (\ref{eq:small-angle}) implies that
\begin{eqnarray*}
\|T(x_{1},\ldots,x_{n})\|^{2} & = & \|x_{1}+\ldots+x_{n}\|^{2}\\
 & = & \sum_{i=1}^{n}\|x_{i}\|^{2}+\sum_{i\ne j}\langle x_{i},x_{j}\rangle\\
 & \geq & \sum_{i=1}^{n}\|x_{i}\|^{2}-\sum_{i\ne j}|\langle x_{i},x_{j}\rangle|\\
 & \geq & \sum_{i=1}^{n}\|x_{i}\|^{2}-c^{k}\sum_{i\ne j}\|x_{i}\|\|x_{j}\|\\
 & \geq & (1-c^{k}(n-1))\sum_{i=1}^{n}\|x_{i}\|^{2}\\
 & = & (1-c^{k}(n-1))\|(x_{1},\ldots,x_{n})\|^{2}.
\end{eqnarray*}
Therefore, for sufficiently large $k$, $T$ is uniformly bounded below on the
subspaces $V_{1}^{k}\oplus\ldots\oplus V_{n}^{k}$. Since each of these subspaces is finite dimensional, it follows that $T$ has closed range.
\end{proof}

\section{Applications} \label{sec:applications}

We now present two classes of examples for which our results imply the Arveson--Douglas conjecture.

\begin{thm}\label{thm:disjoint}
Let $V_{1},\ldots,V_{k}$ be homogeneous varieties in $\mathbb{B}_{d}$
such that $\mathrm{span}(V_{i})\cap\mathrm{span}(V_{j})=\{0\}$ whenever
$i\ne j$, and let $V = V_1 \cup \ldots \cup V_k$. Let $p > \dim V$, and suppose that $\mathcal{F}_{V_{1}},\ldots,\mathcal{F}_{V_{k}}$ are all $p$-essentially normal. Then $\mathcal{F}_{V}$ is also $p$-essentially normal.
\end{thm}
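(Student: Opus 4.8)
The plan is to obtain this as an immediate consequence of Theorem~\ref{thm:disjoint-varieties} together with part (2) of Proposition~\ref{prop:varieties}; essentially all of the work has already been done. First I would dispose of a trivial point: any $V_i$ that happens to be empty can be discarded without changing $V$ or the hypotheses, so we may assume all of the $V_i$ are nonempty. Then $V = V_1 \cup \ldots \cup V_k$ is again a homogeneous variety in $\mathbb{B}_d$, and $\dim V = \dim(V_1 \cup \ldots \cup V_k)$.

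Next I would invoke Theorem~\ref{thm:disjoint-varieties}: since $\mathrm{span}(V_i) \cap \mathrm{span}(V_j) = \{0\}$ whenever $i \ne j$, the algebraic sum $\mathcal{F}_{V_1} + \ldots + \mathcal{F}_{V_k}$ is closed. At this point the hypotheses of part (2) of Proposition~\ref{prop:varieties} are all in place: we have $p > \dim V = \dim(V_1 \cup \ldots \cup V_k)$, the modules $\mathcal{F}_{V_1}, \ldots, \mathcal{F}_{V_k}$ are $p$-essentially normal by assumption, and the algebraic sum $\mathcal{F}_{V_1} + \ldots + \mathcal{F}_{V_k}$ is closed. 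Applying Proposition~\ref{prop:varieties}(2) then yields that $\mathcal{F}_{V_1 \cup \ldots \cup V_k} = \mathcal{F}_V$ is $p$-essentially normal, which is exactly the claim.

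Since the proof is just the composition of two results proved earlier, there is no genuine obstacle to overcome here. It is perhaps worth noting where the substance lies: the disjointness-of-spans hypothesis enters only through Theorem~\ref{thm:disjoint-varieties}, whose proof rests on the exponential decay $\cos(V_i^k, V_j^k) \leq c^k$ with $c < 1$ inherited from the angle between the linear spans via \cite[Lemma 7.11]{DRS11}; and the passage from closedness of the sum to essential normality of the union is Proposition~\ref{prop:ideals}(2) (translated to varieties), which in turn combines the sharpened criterion of Lemma~\ref{lem:refinement} with the commutator-stability statement of Lemma~\ref{lem:sum-closed-comm}.
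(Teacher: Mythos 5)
Your proof is correct, and it is precisely the ``immediate'' argument that the paper itself points out in the first sentence of its own proof before deliberately choosing a different route. The paper's written proof instead applies Proposition~\ref{prop:quantitative} and Theorem~\ref{thm:linear_map}: it sets $L_j = \mathrm{span}(V_j)$, $d_j = \dim L_j$, $D = d_1 + \cdots + d_k$, builds pairwise orthogonal subspaces $K_1,\ldots,K_k$ of $\mathbb{C}^D$ with $\dim K_j = d_j$, chooses a linear map $A:\mathbb{C}^D \to \mathbb{C}^d$ taking each $K_j$ isometrically onto $L_j$, and defines a model variety $W = W_1 \cup \cdots \cup W_k$ in $\mathbb{B}_D$ with $W_j = (A|_{K_j})^{-1}(V_j)$. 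Each $\mathcal{F}_{W_j}$ is unitarily equivalent to $\mathcal{F}_{V_j}$, hence $p$-essentially normal; since the $W_j$ lie in mutually orthogonal coordinate subspaces, $\mathcal{F}_W$ is directly seen to be $p$-essentially normal; and Theorem~\ref{thm:linear_map} then transports this to $\mathcal{F}_V$. What that constructive route buys is an explicit intertwiner $\tilde A$ (a trace-class perturbation of a unitary) between the two quotient tuples and a demonstration of how the Section~2 machinery applies; what your route buys is brevity and a cleaner fit with the decomposability theme of Section~3, at the cost of leaning on Lemma~\ref{lem:sum-closed-comm}, whose proof the paper only sketches by reference. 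Both arguments are valid, and the paper explicitly endorses yours as the short one.
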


\begin{proof}
This result follows immediately from (2) of Proposition \ref{prop:varieties} and Theorem \ref{thm:disjoint-varieties}. However, we will present a different and more constructive proof as an application of Proposition \ref{prop:quantitative} and Theorem \ref{thm:linear_map}.

Let $L_j = \mathrm{span}(V_j)$ for $j=1, \ldots, k$ and define $d_j = \dim L_j$. Put $D = d_1 + \ldots + d_k$, and let $\{e_1, \ldots, e_D\}$  be some orthonormal basis in $\mathbb{C}^D$. Consider the subspaces of $\mathbb{C}^D$ given by $K_1 = \mathrm{span}\{e_1, \ldots, e_{d_1}\}$, $K_2 = \mathrm{span}\{e_{d_1 + 1}\ldots, e_{d_2}\}$, etc., up to $K_k$. Let $A: \mathbb{C}^D \rightarrow \mathbb{C}^d$ be a linear map that takes $K_j$ isometrically onto $L_j$ for all $j=1, \ldots, k$. Now define a homogeneous variety $W$ by  
\[
W = (A\big|_{K_1})^{-1}(V_1) \cup \ldots \cup (A\big|_{K_k})^{-1}(V_k) .
\]
For $j=1, \ldots, k$, the variety $W_j := (A\big|_{K_j})^{-1}(V_j)$ is unitarily equivalent to $V_j$, and therefore the Hilbert module $\mathcal{F}_{W_j}$ is unitarily equivalent to $\mathcal{F}_{V_j}$. It follows from the assumptions that $\mathcal{F}_{W_j}$ is $p$-essentially normal for all $j$. If we show that $\mathcal{F}_{W}$ is $p$-essentially normal, then Theorem \ref{thm:linear_map} will imply that so is $\mathcal{F}_{V}$. But in the situation where the components $W_j$ all lie in mutually orthogonal subspaces it is straightforward to check directly that $\mathcal{F}_{W}$ is essentially normal, so we are done. 
\end{proof}

Finally, let us observe that the Arveson--Douglas conjecture holds for any variety which is a union of subspaces. 

\begin{thm}\label{thm:subspaces}
Let $V_{1},\ldots,V_{k}$ be linear subspaces in $\mathbb{B}_{d}$. Then $\mathcal{F}_{V_{1}\cup\ldots\cup V_{k}}$ is $p$-essentially
normal for all $p>\dim V_1 \cup \ldots \cup V_k = \max \{\dim V_1, \ldots, \dim V_k\}$.  
\end{thm}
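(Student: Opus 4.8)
The plan is to deduce Theorem~\ref{thm:subspaces} by combining the two tools that have just been assembled: Hartz's closedness result (Theorem~\ref{thm:sum-subspaces}) and the decomposition machinery of Proposition~\ref{prop:varieties}(2). First I would observe that a single linear subspace $V_i = L_i \cap \mathbb{B}_d$ has an essentially normal module $\mathcal{F}_{V_i}$: after a unitary change of coordinates $L_i$ becomes a coordinate subspace $\mathbb{C}^{d_i} \subseteq \mathbb{C}^d$, and by the remark following the statement of Conjecture~\ref{conj} (the invariance of essential normality under isometric embedding, \cite[Remark 8.1]{DRS11}) one reduces to the full module $H_{d_i}^2$, whose shift is well known to have trace-class (indeed finite rank, up to the standard estimate) self-commutators; in particular $\mathcal{F}_{V_i}$ is $p$-essentially normal for every $p > \dim V_i = d_i$. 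Since $p > \max\{\dim V_1, \ldots, \dim V_k\}$ by hypothesis, each $\mathcal{F}_{V_i}$ is $p$-essentially normal.

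Next I would invoke Theorem~\ref{thm:sum-subspaces} to conclude that the algebraic sum $\mathcal{F}_{V_1} + \ldots + \mathcal{F}_{V_k}$ is closed. At this point the hypotheses of Proposition~\ref{prop:varieties}(2) are met: the $\mathcal{F}_{V_i}$ are $p$-essentially normal for $p > \dim(V_1 \cup \ldots \cup V_k) = \max\{\dim V_i\}$, and the sum is closed. Therefore $\mathcal{F}_{V_1 \cup \ldots \cup V_k}$ is $p$-essentially normal, which is exactly the assertion. I would also recall the identity, noted in the proof of Proposition~\ref{prop:ideals}, that $\mathcal{F}_{V_1} + \ldots + \mathcal{F}_{V_k}$ (closed up) equals $\mathcal{F}_{V_1 \cup \ldots \cup V_k}$, so that the conclusion of Proposition~\ref{prop:varieties}(2) really does land on the module attached to the union.

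There is essentially no obstacle here: the theorem is a straightforward corollary, and the only point requiring a word of care is the base case. One must be sure that ``linear subspace in $\mathbb{B}_d$'' means $V_i = L_i \cap \mathbb{B}_d$ with $L_i$ a genuine linear (hence homogeneous, hence algebraic) subvariety, so that $I(V_i)$ is a radical homogeneous ideal and the notation $\mathcal{F}_{V_i}$, $S^{V_i}$ is legitimate; this is precisely the definition given just before Theorem~\ref{thm:sum-subspaces}. With that in hand, the self-commutators $[S_i^{V_j}, S_j^{V_j*}]$ are handled by the classical $O(1/n)$ estimate of \cite{Arv98} quoted in~\eqref{eq:p-ineq-1}, together with the dimension count $\dim \mathbb{H}_n^{(d_j)} = O(n^{d_j - 1})$, exactly as in the proof of Lemma~\ref{lem:refinement}. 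I would write the argument in three short sentences: (i) each $\mathcal{F}_{V_i}$ is $p$-essentially normal for $p > \dim V_i$; (ii) the sum $\sum_i \mathcal{F}_{V_i}$ is closed by Theorem~\ref{thm:sum-subspaces}; (iii) apply Proposition~\ref{prop:varieties}(2), noting $\dim(V_1 \cup \ldots \cup V_k) = \max_i \dim V_i$.
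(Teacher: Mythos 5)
Your proposal is correct and follows exactly the paper's own argument: establish the base case that each $\mathcal{F}_{V_i}$ is $p$-essentially normal for $p>\dim V_i$ via the unitary equivalence with $H^2_{\dim V_i}$ (the paper cites \cite[Proposition 5.3]{Arv98} for this, which packages the same Arveson estimate you invoke), then apply Theorem~\ref{thm:sum-subspaces} to get closedness of the algebraic sum, and conclude by Proposition~\ref{prop:varieties}(2).
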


\begin{proof}
This follows from (2) of Proposition \ref{prop:varieties}, from Theorem \ref{thm:sum-subspaces}, and from the known result that, for a subspace $V$, $\mathcal{F}_V$ is $p$-essentially normal for $p>\dim V$ (this last fact is \cite[Proposition 5.3]{Arv98}, together with the observation theat $\mathcal{F}_V$ is unitarily equivalent to $H^2_{\dim V}$). 
\end{proof}

\begin{rem}
A very special case of Theorems \ref{thm:disjoint} and \ref{thm:subspaces}  is that every quotient module associated with a $1$-dimensional homogeneous variety is $p$-essentially normal
for all $p>1$. This special case is a known result, and was obtained
by different techniques in \cite[Proposition 4.1]{GW08}. \end{rem}

\subsection*{Acknowledgment} The authors would like to thank the anonymous referee for helpful comments.

\end{document}